\documentclass[11pt,leqno]{article}
\usepackage[margin=1in]{geometry}
\geometry{letterpaper}

\usepackage{amssymb,amsfonts,amsmath,bbm,mathrsfs,stmaryrd,mathtools}
\usepackage{xcolor}
\usepackage{url}

\usepackage[shortlabels]{enumitem}
\usepackage{tensor}

\usepackage[backref=page]{hyperref}
\hypersetup{colorlinks,
             linkcolor=black!75!red,
             citecolor=blue,
             pdftitle={},
             pdfproducer={pdfLaTeX},
             pdfpagemode=None,
             bookmarksopen=true
             bookmarksnumbered=true}

\usepackage{tikz}
\usetikzlibrary{arrows,calc,decorations.pathreplacing,decorations.markings,intersections,shapes.geometric,through,fit,shapes.symbols,positioning,decorations.pathmorphing}

\usepackage{braket}
\usepackage{tensor}

\usepackage[amsmath,thmmarks,hyperref]{ntheorem}
\usepackage[matrix, arrow, curve]{xy}

\theoremstyle{plain}

\newtheorem{lemma}{Lemma}[section]
\newtheorem{proposition}[lemma]{Proposition}
\newtheorem{corollary}[lemma]{Corollary}
\newtheorem{theorem}[lemma]{Theorem}

\theoremstyle{plain}
\theoremnumbering{Alph}
\newtheorem{theoremN}{Theorem}

\theoremstyle{plain}
\theorembodyfont{\upshape}
\theoremsymbol{\ensuremath{\blacklozenge}}

\newtheorem{remark}[lemma]{Remark}
\newtheorem{remarks}[lemma]{Remarks}

\newtheorem{construction}[lemma]{Construction}

\def\NN{{\mathbb N}}


\numberwithin{equation}{section}

\theoremstyle{nonumberplain}
\theoremsymbol{\ensuremath{\blacksquare}}

\newtheorem{proof}{Proof}

\newcommand\cC{{\mathcal C}}
\newcommand\cD{{\mathcal D}}



\DeclareMathOperator{\id}{id}



\newcommand{\cat}[1]{\textsc{#1}}

\newcommand{\qedhere}{\mbox{}\hfill\ensuremath{\blacksquare}}




\title{On the category of Hopf braces}

\date{}

\author{Ana Agore and Alexandru Chirvasitu}


\begin{document}

\newcommand{\Addresses}{{
    \bigskip
    \footnotesize


    \textsc{Simion Stoilow Institute of Mathematics of the Romanian Academy}
    \par\nopagebreak
    \textsc{Calea Grivitei 21, Bucharest 010702, Romania \, \&}
    \par\nopagebreak
    \textsc{Vrije Universiteit Brussel}
    \par\nopagebreak
    \textsc{Pleinlaan 2, Brussels 1050, Belgium\, \&}
    \par\nopagebreak
    \textsc{Max Planck Institute for Mathematics}
     \par\nopagebreak
     \textsc{Vivatsgasse 7, 53111 Bonn, Germany}
       \par\nopagebreak
    \textit{E-mail address}: \texttt{ana.agore@gmail.com, ana.agore@vub.be}
    
    \medskip
    
    \textsc{Department of Mathematics, University at Buffalo}
    \par\nopagebreak
    \textsc{Buffalo, NY 14260-2900, USA}
    \par\nopagebreak
    \textit{E-mail address}: \texttt{achirvas@buffalo.edu}
  }}

\maketitle

\begin{abstract}
  Hopf braces are the quantum analogues of skew braces and, as such, their cocommutative counterparts provide solutions to the quantum Yang-Baxter equation.  We investigate various properties of categories related to Hopf braces. In particular, we prove that the category of Hopf braces is accessible while the category of cocommutative Hopf braces is even locally presentable. We also show that functors forgetting multiple antipodes and/or multiplications down to coalgebras are monadic. Colimits in the category of cocommutative Hopf braces are described explicitly and a free cocommutative Hopf brace on an arbitrary cocommutative Hopf algebra is constructed.
\end{abstract}

\noindent {\em Key words: Hopf brace, (co)complete category, (co)free object, locally presentable, monadic, Hopf algebra, bialgebra, coalgebra}

\vspace{.5cm}

\noindent{MSC 2020: 16T05, 16T25, 18A35, 18A40, 18E08, 18E13, 16S10, 08A62}


\tableofcontents

\section*{Introduction}

Prompted by Drinfeld's' \cite[Section 9]{dr} idea of studying set-theoretic solutions of the Yang-Baxter equation \cite{baxter, Yang}, a plethora of new algebraic structures have been introduced with the purpose of constructing said solutions: e.g., braided groups \cite{MT-mp}, (skew) braces \cite{gv_skbr}, Rota-Baxter operators on groups \cite{MR4370524}, post-groups \cite{BGST-MA}, trusses \cite{MR4009388}, to name but a few. Among these various algebraic structures, skew braces play a central role. Defined as a pair of appropriately compatible groups on the same underlying set, skew braces are used for constructing non-degenerate set-theoretic solutions of the Yang-Baxter equation.  Their quantum analogues, called \emph{Hopf braces} \cite{AGV}, consist of two compatible Hopf algebra structures sharing the same underlying coalgebra (see Section~\ref{prel} for the precise definition). As expected, cocommutative Hopf braces provide solutions to the quantum Yang-Baxter equation \cite[Theorem 2.3]{AGV}.

The present paper focuses on category-theoretic properties of Hopf braces (cocommutative or not), along with those of precursor categories such as those of coalgebras equipped with multiple compatible Hopf or bialgebra structures. Adjacent themes are taken up in \cite{2411.19238v1}, which centers around category-theoretic issues relevant from the point of view of homological algebra.

To unwind some of the specifics of our main results, we recall some vocabulary. 
\begin{itemize}[wide]
\item An object $x\in \cC$ in a category is \emph{$\lambda$-presentable} \cite[Definition 1.13(2)]{ar} for a \emph{regular cardinal} \cite[Definition I.10.34]{kun_st_2nd_1983} $\lambda$ if the representable functor $\cC(x,-)$ preserves \emph{$\lambda$-directed colimits}, i.e. \cite[Definition 1.13(1)]{ar} those indexed by posets in which every subset of cardinality $<\lambda$ as an upper bound.

  \emph{Presentable} objects are those that are $\lambda$-presentable for some regular cardinal. 

\item $\cC$ is \emph{locally $\lambda$-presentable} \cite[Definition 1.17]{ar} for regular $\lambda$ if it is cocomplete and every object is a $\lambda$-directed colimit of $\lambda$-presentable objects.

  $\cC$ is \emph{locally presentable} if it is locally $\lambda$-presentable for some regular cardinal $\lambda$. 
\end{itemize}

Local presentability purchases much good behavior \cite[Remark 1.56]{ar}: completeness follows, objects have sets (as opposed to proper classes) of subobjects and quotient objects, various \emph{adjoint functor theorems} \cite[\S 18]{ahs} apply (e.g. \emph{cocontinuous}, or colimit-preserving functors between such categories are left adjoints \cite[\S V.8, Corollary]{mcl_2e}), and so on. Some of our main results are to the effect that these conditions obtain in various categories of coalgebras with multiple Hopf algebra/bialgebra structures, as well as that of cocommutative Hopf braces of interest in the quantum-Yang-Baxter literature.

Specifically, we amalgamate portions of Theorem \ref{th:ijh.loc.pres}, Corollary \ref{cor:ijh.co.compl}, Theorem \ref{th:hbr.access} and Corollary \ref{cor:hbr.co.compl} in the following statement. All structures in sight are linear over a field $\Bbbk$, while $\cat{Coalg}$ and $\tensor*[_{(I,J)}]{\cat{Halg}}{}$ denote, respectively, the categories of
\begin{itemize}[wide]
\item $\Bbbk$-coalgebras;

\item and $\Bbbk$-coalgebras equipped with $I$-indexed bialgebra structures all sharing the same unit, so that those multiplications indexed by $J\subseteq I$ have antipodes (and hence the $J$-indexed algebras are Hopf). 
\end{itemize}
An additional `coc' superscript indicates cocommutativity.

\begin{theoremN}\label{thn:locpres}
  \begin{enumerate}[(1),wide]
  \item The categories $\tensor*[_{(I,J)}]{\cat{Halg}}{}$ are all locally presentable, so in particular complete and cocomplete.

  \item The same applies to $\tensor*[_{(I,J)}]{\cat{Halg}}{^{coc}}$.

  \item As well as the category $\cat{Hbr}^{coc}$ of cocommutative Hopf braces.

  \item The category $\cat{Hbr}$ of arbitrary Hopf braces is \emph{accessible} \cite[Remark 2.2(1)]{ar}, and hence locally presentable if and only if it is (co)complete.  \qedhere 
  \end{enumerate}
\end{theoremN}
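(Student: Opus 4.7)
The plan is to derive all four parts by embedding the categories of interest into the locally presentable category $\cat{Coalg}$ of $\Bbbk$-coalgebras, which is itself well-known to be locally finitely presentable via the fundamental theorem of coalgebras (every coalgebra is the filtered union of its finite-dimensional subcoalgebras, and those are finitely presentable). All of the structure-bearing categories in the theorem can be obtained from $\cat{Coalg}$ by successively adjoining extra algebraic operations satisfying equations.

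For (1), I would realize $\tensor*[_{(I,J)}]{\cat{Halg}}{}$ as the category of algebras for an appropriate monad $T$ on $\cat{Coalg}$: an object is a coalgebra equipped with $|I|$ multiplications sharing a common unit and satisfying the bialgebra axioms, together with antipodes for those multiplications indexed by $J\subseteq I$. All the operations involved in these axioms are built from tensor products of coalgebras, which preserve filtered colimits in each variable (since this holds at the level of vector spaces and filtered colimits of coalgebras are computed there). Consequently $T$ is accessible, and the standard Adámek-Rosický machinery yields local presentability of $T$-algebras, settling (1). Parts (2) and (3) follow analogously: one first restricts to cocommutative coalgebras (themselves locally presentable as a coreflective subcategory of $\cat{Coalg}$, with cocommutativity being preserved under colimits), and then, for (3), further cuts out cocommutative Hopf braces by imposing the Hopf-brace compatibility axiom, an equality of coalgebra morphisms that defines a full subcategory closed under all limits and under filtered colimits.

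For (4), the plan is to view $\cat{Hbr}$ as the full subcategory of $\tensor*[_{(\{1,2\},\{1,2\})}]{\cat{Halg}}{}$ cut out by the Hopf-brace compatibility equation, which is an equality of coalgebra morphisms $H\otimes H\otimes H\to H$ built from the two multiplications, the comultiplication, and the antipode. Using the (accessible) monadicity of the forgetful functor from $\tensor*[_{(I,J)}]{\cat{Halg}}{}$ down to $\cat{Coalg}$ referenced in the abstract, filtered colimits in $\tensor*[_{(\{1,2\},\{1,2\})}]{\cat{Halg}}{}$ are created at the coalgebra level, and hence preserve the compatibility equation. Thus $\cat{Hbr}$ is an accessibly embedded full subcategory of a locally presentable category, which is itself accessible by the standard results of \cite{ar}. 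The concluding biconditional follows from the classical fact, again from \cite{ar}, that an accessible category is locally presentable exactly when it is complete, equivalently cocomplete.

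The main obstacle is the verification in (4) that the Hopf-brace compatibility axiom, which becomes a genuinely nontrivial relation involving the antipode once cocommutativity is dropped, is preserved by the filtered colimits of the ambient category. This relies crucially on the accessibility (not just monadicity) of the forgetful functor from $\tensor*[_{(\{1,2\},\{1,2\})}]{\cat{Halg}}{}$ to $\cat{Coalg}$, which is precisely what the earlier monadicity and local presentability results of the paper must supply. Once this is in hand, the passage from $\tensor*[_{(\{1,2\},\{1,2\})}]{\cat{Halg}}{}$ to $\cat{Hbr}$ is automatic.
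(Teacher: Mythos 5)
Your overall architecture (build everything over the locally presentable category $\cat{Coalg}$, cut out the structures by equations, invoke the Ad\'amek--Rosick\'y machinery) matches the paper's, but two steps as written are genuine gaps. First, in part (1) you begin by ``realizing $\tensor*[_{(I,J)}]{\cat{Halg}}{}$ as the category of algebras for an appropriate monad $T$ on $\cat{Coalg}$'' and then argue that $T$ is accessible. The existence of such a $T$ --- equivalently, of free $(I,J)$-Hopf algebras on coalgebras --- is not something you can assume: the antipode is not an $n$-ary operation in a signature over $\cat{Coalg}$ (it is a coalgebra map $H^{cop}\to H$ whose defining axioms mix it with the comultiplication), so the ``variety of algebras over a locally presentable base'' template that handles the multiplications does not cover it, and your accessibility argument (``all operations are built from tensor products'') silently ignores the antipode. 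The paper's logic runs in the opposite direction: accessibility is first established by exhibiting $\tensor*[_{(I,J)}]{\cat{Halg}}{}$ as an equifier inside the comma category $(-)^{cop}\circ U_I\downarrow U_I$ (following Porst), completeness comes from created limits, local presentability follows from \cite[Corollary 2.47]{ar}, and only \emph{then} does one get a left adjoint from \cite[Theorem 1.66]{ar} and monadicity from Beck's theorem. Your plan presupposes the output of the hardest step.

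Second, in part (4) you conclude that $\cat{Hbr}$ is accessible because it is ``an accessibly embedded full subcategory of a locally presentable category.'' That implication is false in ZFC (its unrestricted form is tied to Vop\v{e}nka's principle); closure under filtered colimits alone does not yield accessibility. What saves the argument --- and what the paper actually uses --- is that $\cat{Hbr}$ is not merely accessibly embedded but is the \emph{equifier} of the two natural transformations $\bullet^{\otimes 3}\circ\cat{forget}\Rightarrow\cat{forget}$ from $\tensor*[_2]{\cat{Halg}}{}$ to $\cat{Vect}$ given by the two sides of \eqref{eq:hbr}, and equifiers of accessible functors are accessible by \cite[Lemma 2.76]{ar}. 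You have all the ingredients for this but invoke the wrong closure principle. Relatedly, your description of \eqref{eq:hbr} in part (4) as ``an equality of coalgebra morphisms $H\otimes H\otimes H\to H$'' is incorrect outside the cocommutative setting: only the left-hand side is a coalgebra map in general. This is harmless for the equifier argument (whose target is $\cat{Vect}$), but it is precisely the point that separates part (3), where both sides are coalgebra maps and one gets a variety internal to $\cat{Coalg}^{coc}$ and hence local presentability, from part (4), where only accessibility is obtained. Your treatment of (2) and (3) is otherwise sound modulo the issues inherited from (1).
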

We mention that Theorem \ref{thn:locpres} has seen some use and interest in \cite{2411.19238v1}, where \cite[\S 5]{2411.19238v1} relies on the existence of binary coproducts in $\cat{Hbr}^{coc}$ in deducing that category's \emph{semi-abelianness} \cite[\S 2.5, Definition]{jmt_semiab}. 

A separate but related theme we tackle is that of \emph{monadicity} for functors between categories of multiple Hopf algebras. A few recollections:

\begin{itemize}[wide]
\item A \emph{monad} $(T,\mu,\eta)$ (\cite[Definition 4.1.1]{brcx_hndbk-2}, \cite[\S VI.1]{mcl_2e}) on a category $\cC$ is a unital associative algebra
  \begin{equation*}
    \id\xrightarrow{\quad\eta\quad}T
    ,\quad
    T\circ T\xrightarrow{\quad\mu} T
  \end{equation*}
  in the \emph{monoidal category} \cite[Definition 6.1.1]{brcx_hndbk-2} $\left(\cC^{\cC},\ \circ,\ \id\right)$ of $\cC$-endofunctors with composition as the tensor product. 

\item An \emph{algebra} \cite[Definition 4.1.2]{brcx_hndbk-2} over $(T,\mu,\eta)$, also called a $T$-algebra, is an object $c\in \cC$ equipped with a morphism $Tc\to c$ satisfying the unitality and associativity conditions with respect to $\mu$ and $\eta$ reminiscent of the usual definition of a module over an algebra.

  $T$-algebras form a category typically denoted by $\cC^T$, equipped with a forgetful functor back to $\cC$
  
\item A functor $\cD\xrightarrow{F}\cC$ is \emph{monadic} \cite[Definition 4.4.1]{brcx_hndbk-2} (or $\cD$ is \emph{monadic along $F$}) if there is a monad $(T,\mu,\eta)$ on $\cC$ making the following diagram commute (up to natural isomorphism):
  \begin{equation*}
    \begin{tikzpicture}[>=stealth,auto,baseline=(current  bounding  box.center)]
      \path[anchor=base] 
      (0,0) node (l) {$\cD$}
      +(2,.5) node (u) {$\cC^T$}
      +(4,0) node (r) {$\cC$}
      ;
      \draw[->] (l) to[bend left=6] node[pos=.5,auto] {$\scriptstyle \text{equivalence}$} (u);
      \draw[->] (u) to[bend left=6] node[pos=.5,auto] {$\scriptstyle \cat{forget}$} (r);
      \draw[->] (l) to[bend right=6] node[pos=.5,auto,swap] {$\scriptstyle F$} (r);
    \end{tikzpicture}
  \end{equation*}
\end{itemize}
Monadic functors (sometimes also termed \emph{tripleable} \cite[\S 3.3]{bw}) are one way of capturing the intuition that $\cD$ consists, roughly speaking, of objects in $\cC$ equipped with additional ``algebraic structure''. Preeminent examples \cite[\S VI.8, Theorem 1]{mcl_2e} very much in line with this intuition are the forgetful functors back to $\cat{Set}$ from \emph{varieties of algebras}: categories of associative algebras, groups, monoids, commutative rings, and so forth. 

Not only are monadic functors right adjoints, but they reflect isomorphisms, preserve sufficiently well-behaved coequalizers \cite[Theorem 4.4.4]{brcx_hndbk-2}, afford adjunction-lifting results \cite[Theorem 4.5.6]{brcx_hndbk-2}, and similarly lend themselves to any number of other applications. In light of this, the following result might be of some interest in the present context of studying Hopf braces; it again aggregates sub-statements of Theorems \ref{th:ijh.loc.pres} and \ref{th:hbr.access}. 

\begin{theoremN}\label{thn:monad}
  \begin{enumerate}[(1),wide]
  \item For sets $J'\subseteq J\subseteq I$ the forgetful functors
    \begin{equation*}
      \begin{aligned}
        \tensor*[_{(I,J)}]{\cat{Halg}}{}
        &\xrightarrow{\qquad}
          \cat{Coalg}\\
        \tensor*[_{(I,J)}]{\cat{Halg}}{}
        &\xrightarrow{\qquad}
          \tensor*[_{(I,J')}]{\cat{Halg}}{}
      \end{aligned}
    \end{equation*}
    (of which the second forgets the $(J\setminus J')$-indexed antipodes) are monadic. 
    
  \item The forgetful functor 
  \begin{equation*}
      \cat{Hbr}^{coc}
      \xrightarrow{\qquad}
      \cat{Coalg}^{coc}
    \end{equation*}
   from cocommutative Hopf braces to cocommutative coalgebras is monadic.
  \qedhere
  \end{enumerate}
\end{theoremN}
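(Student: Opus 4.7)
The plan is to verify Beck's monadicity criterion in the following form: a right adjoint $U: \cD \to \cC$ is monadic if and only if it reflects isomorphisms and $\cD$ has (and $U$ preserves) coequalizers of $U$-split pairs. I would apply this uniformly to each of the forgetful functors listed in the statement.

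To begin, each of these forgetful functors must be exhibited as a right adjoint. In every case the source and target are locally presentable by Theorem~\ref{thn:locpres}, the forgetful functors preserve all limits (these are computed by endowing the underlying coalgebra-level limit with the componentwise algebraic structure), and they are accessible, preserving sufficiently filtered colimits by virtue of the equational nature of the additional structure. The adjoint functor theorem for locally presentable categories then supplies left adjoints. Reflection of isomorphisms is essentially formal: if $f$ in $\cD$ becomes invertible after applying $U$, then its coalgebraic inverse automatically intertwines all multiplications, antipodes, and (where relevant) the Hopf brace compatibility, because these axioms are equational and transfer to $f^{-1}$ by pre- and post-composition with $f$.

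The substantive step is the creation of $U$-split coequalizers. Given a $U$-split coequalizer $X \rightrightarrows Y \xrightarrow{q} Z$ with $X, Y \in \cD$, absoluteness of split coequalizers implies that $X^{\otimes n} \rightrightarrows Y^{\otimes n} \to Z^{\otimes n}$ is again a split coequalizer in $\cC$ for every $n$. The structure maps on $Y$ --- multiplications $m_i: Y^{\otimes 2}\to Y$, antipodes $S_j: Y \to Y$, and, in part~(2), the ternary Hopf brace compatibility morphism $Y^{\otimes 3} \to Y$ --- each coequalize the parallel pair and therefore descend uniquely to structure maps on $Z$; the axioms they satisfy on $Y$ propagate to $Z$ by the universal property applied to the appropriate tensor power.

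I expect the main technical obstacle to lie in part~(2): the Hopf brace compatibility is an equality between two coalgebra morphisms $Y^{\otimes 3} \to Y$ assembled from both multiplications, one antipode and the comultiplication, so verifying that it persists on $Z^{\otimes 3} \to Z$ requires careful bookkeeping through the tensor-cube split coequalizer. Nonetheless, no ideas beyond absoluteness of split coequalizers are required, and the argument for parts~(1) and (2) is genuinely uniform, with the only wrinkle in the former being that the forgetful functor $\tensor*[_{(I,J)}]{\cat{Halg}}{} \to \tensor*[_{(I,J')}]{\cat{Halg}}{}$ is in fact fully faithful (antipodes being unique when they exist), so that here monadicity reduces to reflectiveness, which is already furnished by the adjoint produced in the first step.
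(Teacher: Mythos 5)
Your proposal is correct and follows essentially the same route as the paper: Beck's monadicity theorem, with right adjointness supplied by the adjoint functor theorem for locally presentable/accessible categories (the paper cites \cite[Theorem 1.66]{ar}), isomorphism reflection being formal, and the descent of structure maps along $U$-split coequalizers via absoluteness exactly as in the cited \cite[\S 2.6, Proposition]{zbMATH05312006}. The one genuine divergence is your treatment of $\tensor*[_{(I,J)}]{\cat{Halg}}{}\to\tensor*[_{(I,J')}]{\cat{Halg}}{}$: you observe it is fully faithful (antipodes are unique and automatically preserved by bialgebra maps), so monadicity follows from the general fact that a fully faithful right adjoint is monadic (the induced monad being idempotent), whereas the paper reruns Beck's criterion by transferring isomorphism reflection and split-coequalizer preservation along the commuting triangle over $\cat{Coalg}$; your shortcut is valid and arguably cleaner, at the cost of invoking a general fact the paper does not need.
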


As the original motivation for studying cocommutative Hopf braces lies in the fact that each such gadget generates a solution of the quantum Yang-Baxter equation, being able to produce new examples is a central theme of the theory. With this in mind, Section~\ref{se:colim} provides explicit constructions for various colimits mentioned in Theorem~\ref{thn:locpres}. Specifically, coproducts and coequalizers for cocommutative Hopf braces are explicitly described, starting with the analogous constructions being performed in the categories of $2$-bialgebras and $2$-Hopf algebras, respectively.  
Furthermore, the free cocommutative Hopf brace on an arbitrary cocommutative Hopf algebra is explicitly constructed (see Proposition~\ref{pr:ladj} and Proposition~\ref{pr:incadj}).

\subsection*{Acknowledgements}

The authors are grateful to Marino Gran and Andrea Sciandra for insightful questions which have been the starting point of this work. The second named author was partially supported by a grant of the Ministry of Research, Innovation and Digitization, CNCS/CCCDI -- UEFISCDI, project number PN-IV-P2-2.2-MCD-2024-0354. 

\section{Preliminaries}\label{prel}

Throughout the paper $\Bbbk$ is a field and all vector spaces, tensor products, homomorphisms, (co)algebras, bialgebras, Hopf algebras/braces are linear over $\Bbbk$. For a coalgebra $C$, we use Sweedler's $\Sigma$-notation $\Delta(c)= c_{(1)}\otimes c_{(2)}$ with suppressed summation sign. $A^{op}$ stands for the opposite of the algebra $A$ and, similarly, $C^{cop}$ denotes the co-opposite of the coalgebra $C$. In what follows {$\cat{Vect}$ denotes the  category of vector spaces. Further categories of interest will be introduced in Section~\ref{se:mult.hopf}.

We refer to \cite{rad, Sw} for unexplained notions concerning Hopf algebras and to \cite[Chapter 5]{CMZ} for a thorough background on the quantum Yang-Baxter equation. Category-theoretic terminology, as covered in various sources such as \cite{ahs, ar, brcx_hndbk-1, brcx_hndbk-2}, will feature prominently. More specific references are provided throughout the text.  

Recall \cite[Definition 1.1]{AGV} that a \emph{Hopf brace} over a coalgebra $(H, \Delta, \varepsilon)$ consists of two Hopf algebra structures on $H$, denoted by $(H, \cdot, 1, \Delta, \varepsilon, S)$ and respectively $(H, \diamond, 1_{\diamond}, \Delta, \varepsilon, T)$, compatible in the sense that for all $x$, $y$, $z \in H$ we have:
\begin{equation}\label{eq:hbr}
  x \diamond (y \cdot z) = \bigl(x_{(1)} \diamond y\bigl) \cdot \,S(x_{(2)})\cdot \bigl(x_{(3)} \diamond z\bigl)
\end{equation}
We refer to the two Hopf algebras $(H, \cdot, 1, \Delta, \varepsilon, S)$ and $(H, \diamond, 1_{\diamond}, \Delta, \varepsilon, T)$ underlying a Hopf brace as the \emph{first} and \emph{second Hopf algebra structure} of $H$ and we denote them by $H_{\bullet}$ and $H_{\diamond}$ respectively.  By setting $x = y = 1_{\diamond}$ and respectively $x = z = 1_{\diamond}$ in \eqref{eq:hbr} it is easily seen \cite[Remark 1.3]{AGV} that in any Hopf brace we have $1 = 1_{\diamond}$. A Hopf brace will be called cocommutative if its underlying coalgebra is cocommutative.

Given two Hopf braces $(H, \cdot, \diamond)$ and $(H', \cdot, \diamond)$, a $k$-linear map $f$ between the two underlying vector spaces is called a \emph{morphism of Hopf braces} if both $f \colon H_{\cdot} \to H_{\cdot}^{'}$ and $f\colon H_{\diamond} \to H^{'}_{\diamond}$ are morphisms of Hopf algebras. A coideal $I \subseteq H$ which is a Hopf ideal with respect to both Hopf algebra structures of $H$ will be called a \emph{Hopf brace ideal}. If $I$ is a Hopf brace ideal, then the quotient space $H/I$ is again a Hopf brace and the quotient map $\pi \colon H \to H/I$ becomes a morphism of Hopf braces.

Any Hopf algebra $(H, \cdot, 1, \Delta, \varepsilon, S)$  gives rise to a Hopf brace by considering $x \diamond y = x \cdot y$ for all $x$, $y \in H$. Furthermore, if $(H, \cdot, 1, \Delta, \varepsilon, S)$ is an involutory Hopf algebra, then we can define a new multiplication on $H$ by $x \diamond y = y \cdot x$, for all $x$, $y \in H$, which together with the underlying coalgebra structure of $H$ form a new Hopf algebra. Moreover, it is straightforward to check that \eqref{eq:hbr} is trivially fulfilled and thus $(H, \cdot, \cdot^{op}, 1, \Delta, \varepsilon, S, S)$ is a Hopf brace. If $S$ is only invertible, one obtains a Hopf brace $(H, \cdot, \cdot^{op}, 1, \Delta, \varepsilon, S, S^{-1})$.

The following device will come in handy repeatedly. 

\begin{construction}\label{con:id.chn}
  Let $\bigl(H, \cdot,\diamond, 1, \Delta, \varepsilon, S, T\bigl)$ be a \emph{2-Hopf algebra}, i.e. a coalgebra equipped with two compatible Hopf structures sharing a unit (we examine such structures more closely in Section \ref{se:mult.hopf}). For a subspace $J\le H$, write
  \begin{itemize}[wide]
  \item $J_{1}$ for the ideal generated by $J_{0}:=J$, ${S}(J_{0})$, ${S}^{2}(J_{0})$, $\cdots$ with respect to the first algebra structure of $H$,
  \item $J_{2}$ for the ideal generated by $J_{1}$, ${T}(J_{1})$, ${T}^{2}(J_{1})$, $\cdots$ with respect to the second algebra structure of $H$.
  \end{itemize} 
  Note that ${S}(J_{1}) \subseteq J_{1}$ and ${T}(J_{2}) \subseteq J_{2}$. Continuing this process we obtain a chain 
  \begin{equation}\label{eq:1}
    J=J_{0} \subset J_{1} \subset J_{2} \subset J_{3} \subset \cdots \subset J_{n} \subset \cdots
  \end{equation}
  of subspaces in $H$, and we set $\widetilde{J} := \cup_{n \in \NN} J_{n}$.

  If $J$ is a coideal so are all of the $J_n$, hence also $\widetilde{J}$. Furthermore, we claim that $\widetilde{J}$ is in fact a Hopf brace ideal in the sense of the preceding discussion.

  Indeed, if $h \in H$ and $x \in \widetilde{J}$ then there exists some $n_{0} \in \NN$ such that $x \in J_{n_{0}} \subset J_{n_{0}+1}$. We can assume without loss of generality that $J_{n_{0}}$ and $J_{n_{0}+1}$ are ideals with respect to the first and the second algebra structures of $H$, respectively. Therefore we have $h  {\cdot} x$, $x  {\cdot} h \in J_{n_{0}} \subset \widetilde{J}$ and $h  {\diamond} x$, $x  {\diamond} h \in J_{n_{0}+1} \subset \widetilde{J}$.  Moreover, having in mind the way we defined the chain of coideals \eqref{eq:1} we obviously have $ {S}(x) \in J_{n_{0}} \subset \widetilde{J}$ and $ {T}(x) \in J_{n_{0}+1} \subset \widetilde{J}$.
\end{construction}


\section{Coalgebra-overlapping Hopf structures}\label{se:mult.hopf}

Hopf braces and their set-theoretic counterparts (braces \cite[Definition 2 and Proposition 4]{zbMATH05118810}, skew braces \cite[Definition 1.1]{gv_skbr}), involving multiple group structures on a common set, motivate studying the following type of (co)algebraic structure.

\begin{itemize}[wide]

\item Vector spaces over $\Bbbk$ endowed with two (unital, associative) algebra structures sharing the same unit called \emph{$2$-algebras}. Together with unit preserving $\Bbbk$-linear maps which respect both algebra structures they form a category denoted by $\tensor*[_2]{\cat{Alg}}{}$.

\item Coalgebras over $\Bbbk$, implicitly assumed coassociative and counital, constituting a category $\cat{Coalg}$.

\item We consider also coalgebras equipped with multiple (unital, associative) algebra structures: the unit is common to all such, while the multiplications are indexed by a set $I$. When all these algebra structures are coalgebra maps the resulting gadgets are termed \emph{$I$-bialgebras}, and constitute a category $\tensor*[_I]{\cat{Bialg}}{}$.

\item Those $I$-bialgebras whose $J$-indexed bialgebra structures happen to be Hopf for $J\subseteq I$ we call \emph{$(I,J)$-Hopf algebras}; the category in question is $\tensor*[_{(I,J)}]{\cat{Halg}}{}$.

\item For $J=I$ we streamline the subscript to plain $I$, as in $\tensor*[_I]{\cat{Halg}}{}$. Furthermore, identifying non-negative integers $n$ respectively with $\left\{0,\ \cdots,\ n-1\right\}$, the meaning of symbols such as $\tensor*[_n]{\cat{Bialg}}{}$ or $\tensor*[_n]{\cat{Halg}}{}$ becomes apparent. The unadorned $\cat{Bialg}$ and $\cat{Halg}$ denote the classical categories of bialgebras and Hopf algebras, respectively.

\item We occasionally constrain the coalgebra structures by requiring cocommutativity, in which case the corresponding categories acquire a `coc' superscript: $\cat{Coalg}^{coc}$, $\tensor*[_I]{\cat{Bialg}}{^{coc}}$, etc.
\end{itemize}

\begin{remarks}\label{res:digr}
  \begin{enumerate}[(1),wide]
  \item\label{item:res:digr:qset} Coalgebras are to be regarded, in this context, as a type of ``quantum set'': ordinary sets $S$ are recoverable via their corresponding \emph{group-like coalgebras} $\Bbbk S$ \cite[p.6, Example (1)]{Sw}, defined uniquely by
    \begin{equation*}
      s_1\otimes s_2:=s\otimes s
      \quad\text{and}\quad
      \varepsilon(s)=1
      ,\quad
      \forall s\in S.
    \end{equation*}
    Adopting this perspective, the category $\tensor*[_2]{\cat{Halg}}{}$ (of special interest below because it houses the subcategory of Hopf braces) is a kind of linearized version of the category of structures \cite[\S 3]{zbMATH01477087} (or \cite[Introduction]{zbMATH07673993}) called \emph{digroups}: these are sets equipped with two group structures with a common unit.

  \item\label{item:res:digr:bialg2coalg} We will take for granted the fact that $\tensor*[_I]{\cat{Bialg}}{}$ is \emph{locally presentable} \cite[Definition 1.17]{ar} and the forgetful functor
    \begin{equation}\label{eq:ibialg2coalg}
      \tensor*[_{I}]{\cat{Bialg}}{}
      \xrightarrow{\quad U_I:=\cat{forget}\quad}
      \cat{Coalg}
    \end{equation}
    is \emph{monadic} \cite[Definition 4.4.1]{brcx_hndbk-2} (indeed, \emph{finitary} monadic, in the sense \cite[\S 3.18]{ar} that the monad in question preserves \emph{filtered colimits} \cite[Definition 1.4]{ar}). This is no more difficult to prove than the analogue for $I:=\{*\}$ in \cite[Summary 4.3, bottom right-hand functor]{zbMATH05312006} (taking that discussion's base category $\cC$ to be $\cat{Coalg}$).
  \end{enumerate}  
\end{remarks}

\begin{theorem}\label{th:ijh.loc.pres}
  \begin{enumerate}[(1),wide]
  \item\label{item:th:ijh.loc.pres:all} For set inclusions $J'\subseteq J\subseteq I$ the category $\tensor*[_{(I,J)}]{\cat{Halg}}{}$ is locally presentable and monadic along
  \begin{equation}\label{eq:fgt2coalg}
    \tensor*[_{(I,J)}]{\cat{Halg}}{}
    \xrightarrow{\quad U_{(I,J)}:=\cat{forget}\quad}
    \cat{Coalg}
  \end{equation}
  as well as
  \begin{equation}\label{eq:fgt2bialg}
    \tensor*[_{(I,J)}]{\cat{Halg}}{}
    \xrightarrow{\quad U_{(,J\to J')}:=\cat{forget}\quad}
    \tensor*[_{(I,J')}]{\cat{Halg}}{}.
  \end{equation}

\item\label{item:th:ijh.loc.pres:coc} The analogous statement holds in the cocommutative case. 
  \end{enumerate}  
\end{theorem}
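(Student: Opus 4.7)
The plan is to prove both parts in parallel by exhibiting $\tensor*[_{(I,J)}]{\cat{Halg}}{}$ as a full reflective subcategory of $\tensor*[_I]{\cat{Bialg}}{}$ with accessible reflector. Granting this, local presentability of $\tensor*[_{(I,J)}]{\cat{Halg}}{}$ will follow from the local presentability of $\tensor*[_I]{\cat{Bialg}}{}$ recorded in Remark~\ref{res:digr}\eqref{item:res:digr:bialg2coalg} via \cite[Theorem 2.48]{ar}; monadicity of $U_{(,J\to J')}$ will follow from the general principle that a full reflective inclusion is monadic, its induced monad being idempotent \cite[\S 4.2]{brcx_hndbk-2}; and monadicity of $U_{(I,J)}$ will be derived by combining this with the monadicity of $\tensor*[_I]{\cat{Bialg}}{} \to \cat{Coalg}$.

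The key step is to verify that $\tensor*[_{(I,J)}]{\cat{Halg}}{} \subseteq \tensor*[_I]{\cat{Bialg}}{}$ is closed under limits and filtered colimits, since a full subcategory of a locally presentable category enjoying such closure is automatically an accessibly embedded reflective subcategory. Limits are routine: antipodes assemble componentwise in products and restrict to equalizers. For filtered colimits, the convolution identity $m_j(S_j\otimes\id)\Delta = u\varepsilon = m_j(\id\otimes S_j)\Delta$ characterizing each $S_j$ passes through, since tensor products commute with filtered colimits of vector spaces.

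For the monadicity of $U_{(I,J)}\colon \tensor*[_{(I,J)}]{\cat{Halg}}{} \to \cat{Coalg}$, I would apply Beck's monadicity theorem in its crude form. The left adjoint is the composite of the free $I$-bialgebra functor with the Hopf reflector, while reflection of isomorphisms is immediate, as a $\cat{Coalg}$-isomorphism between $(I,J)$-Hopf algebras has an inverse automatically respecting all multiplications and antipodes. The main obstacle is preservation of reflexive coequalizers, which I would address by showing that a reflexive coequalizer of $(I,J)$-Hopf algebras, when computed in $\tensor*[_I]{\cat{Bialg}}{}$, already lies in $\tensor*[_{(I,J)}]{\cat{Halg}}{}$. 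This is the point at which Construction~\ref{con:id.chn}, transplanted from the Hopf-brace to the $(I,J)$-setting, plays a role: for reflexive pairs the $I$-bialgebra ideal cutting out the coequalizer coincides with its $S_j$-stable hull produced by the iterative construction there. The cocommutative case follows by identical reasoning, since cocommutativity is preserved under limits, filtered colimits, and the Hopf reflector, and $\cat{Coalg}^{coc}$ is itself locally presentable.
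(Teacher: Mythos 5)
Your architecture is a genuinely different route from the paper's (which proves accessibility via equifier and comma-category arguments and then applies Beck's theorem with split coequalizers), and most of it is viable: fullness of $\tensor*[_{(I,J)}]{\cat{Halg}}{}$ in $\tensor*[_I]{\cat{Bialg}}{}$ is fine because antipodes are unique and automatically preserved by bialgebra maps, closure under filtered colimits is fine because those are computed on underlying vector spaces, and the idempotent-monad argument for the monadicity of \eqref{eq:fgt2bialg} is clean. The genuine gap is your limit-closure step. The claim that ``antipodes assemble componentwise in products and restrict to equalizers'' presupposes that limits in $\cat{Coalg}$ and $\tensor*[_I]{\cat{Bialg}}{}$ are computed on underlying vector spaces, and they are not: the product of coalgebras is not the vector-space product, and the equalizer of coalgebra maps is the largest subcoalgebra contained in the linear equalizer, not the linear equalizer itself. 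Concretely, if $B=\lim B_i$ in $\tensor*[_I]{\cat{Bialg}}{}$ you can induce candidate maps $S_j$ with $p_iS_j=S_j^{(i)}p_i$ (using that $(-)^{cop}$ preserves limits and that the diagram maps preserve antipodes), but verifying the convolution identity $m_j(S_j\otimes\id)\Delta=u\varepsilon$ cannot be done ``componentwise'': the projections $p_i$ are jointly monic only against coalgebra morphisms (they need not be jointly injective as linear maps), while $m_j(S_j\otimes\id)\Delta$ is not a coalgebra map in general. This verification is precisely the nontrivial content of the step; the paper avoids it by quoting, index by index, the known creation-of-limits/monadicity statements for $\cat{Halg}\to\cat{Coalg}$ and $\cat{Bialg}\to\cat{Coalg}$ from \cite{porst_formal-1,porst_formal-2}, and your proof needs the same appeal (applied to each $j\in J$) or an actual argument here; as written the step fails.

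A second, smaller omission sits in your crude-monadicity argument for \eqref{eq:fgt2coalg}. Showing that a reflexive coequalizer of $(I,J)$-Hopf algebras computed in $\tensor*[_I]{\cat{Bialg}}{}$ already lies in $\tensor*[_{(I,J)}]{\cat{Halg}}{}$ only handles the inclusion; since $U_{(I,J)}$ factors as $U_I$ composed with that inclusion, you also need $U_I$ to preserve reflexive coequalizers, and this does not follow from monadicity of $U_I$ alone. It is true, and easy: the common section makes $\mathrm{im}(f-g)$ an ideal for every multiplication and it is visibly stable under every $S_j$, so the $\cat{Coalg}$-level quotient is already the coequalizer in $\tensor*[_I]{\cat{Bialg}}{}$ and in $\tensor*[_{(I,J)}]{\cat{Halg}}{}$ (equivalently, Construction~\ref{con:id.chn} stabilizes at stage $0$) --- but it must be said. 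Note also that the paper gets away with the weaker requirement of preserving split coequalizers, so once the limit-closure issue above is repaired by citation you could shorten this part accordingly; finally, in the cocommutative case the correct statement is that the whole argument runs inside $\cat{Coalg}^{coc}$ and $\tensor*[_I]{\cat{Bialg}}{^{coc}}$ (cocommutativity is not preserved by limits computed in the ambient $\cat{Coalg}$), which is what you need anyway.
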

\begin{proof}
  The same argument handles both versions, so we limit ourselves to \ref{item:th:ijh.loc.pres:all}. A number of auxiliary observations will help move the proof along. Much of the argument eventually proving the local presentability of the category of plain Hopf algebras in \cite[Theorem 6]{porst_formal-2} will transport over.
  
  \begin{enumerate}[(I),wide]
  \item\label{item:th:ijh.loc.pres:pf.crlim}: The functor \eqref{eq:fgt2coalg} \emph{creates limits} \cite[Definition 13.17(2)]{ahs}. Recall \cite[Proposition 47 2.]{porst_formal-1} that $\cat{Coalg}$ is locally presentable so in particular complete by \cite[Remark 1.56(1)]{ar}. That the $\cat{Coalg}$-limit of a diagram in $\tensor*[_{(I,J)}]{\cat{Halg}}{}$ acquires a unique compatible $(I,J)$-Hopf algebra structure follows immediately from the analogous statements for the forgetful functor for plain Hopf algebras or bialgebras (the forgetful functors from (cocommutative) Hopf algebras and bialgebras to coalgebras are monadic, see e.g. \cite[Proposition 47 3.]{porst_formal-1} and \cite[Theorem 10 2. and Proposition 28 2.]{porst_formal-2}): simply apply that remark to each individual bialgebra/Hopf algebra structure. 

  \item\label{item:th:ijh.loc.pres:pf.acc}: $\tensor*[_{(I,J)}]{\cat{Halg}}{}$ is \emph{accessible} in the sense of \cite[Remark 2.2(1)]{ar}. The proof strategy is that employed in proving the accessibility of ordinary Hopf algebras (or rather more broadly, Hopf monoids internal to appropriate symmetric monoidal categories) in \cite[Proposition 47 5.]{porst_formal-1}, with minor modifications.

    Observe first that just as for plain bialgebras (e.g. \cite[\S 2.7, Remark 2]{zbMATH05312006} or \cite[\S 2.2, item 2.]{porst_formal-1}), the forgetful functor $\tensor*[_I]{\cat{Bialg}}{}\to \cat{Coalg}$ realizes its domain as a joint \emph{equifier} \cite[Lemma 2.76]{ar} for a family of pairs of natural transformations between \emph{accessible functors} \cite[Definition 2.16]{ar}. The accessibility of $\tensor*[_I]{\cat{Bialg}}{}\to \cat{Coalg}$ follows from the selfsame \cite[Lemma 2.76]{ar} recalling that both $\tensor*[_I]{\cat{Bialg}}{}$ and $\cat{Coalg}$ are locally presentable (hence in particular accessible by \cite[Corollary 2.47]{ar}). 
    
    As to $(I,J)$-Hopf algebras, consider the accessible endofunctor $(-)^{cop}$ on $\cat{Coalg}$. An antipode for the $j$-labeled multiplication of an object $B\in \tensor*[_I]{\cat{Bialg}}{}$ (for $j\in J\subseteq I$) is in particular a \emph{$(-)^{cop}$-algebra structure} in the sense of \cite[Notation 2.74]{ar}, i.e. a morphism $B^{cop}\to B$ in $\cat{Coalg}$. We thus have, for $j\in J$, inclusion functors
    \begin{equation}\label{eq:iotaj}
      \tensor*[_{(I,J)}]{\cat{Halg}}{}
      \xrightarrow{\quad\iota_j\quad}
      (-)^{cop}\circ U_I\downarrow U_I
      :=
      \text{\emph{comma category} of \cite[Notation 2.42]{ar}}
    \end{equation}
    for the forgetful functor \eqref{eq:ibialg2coalg} analogous to \eqref{eq:fgt2coalg}. On the one hand the comma category is accessible by \cite[Theorem 2.43]{ar}, while on the other hand, as in \cite[Remark 45]{porst_formal-1}, the functors \eqref{eq:iotaj} realize their common domain $\tensor*[_{(I,J)}]{\cat{Halg}}{}$ as a joint equifier. Accessibility once more follows from \cite[Lemma 2.76]{ar}.

  \item\label{item:th:ijh.loc.pres:pf.locpres}: Local presentability. This follows from \cite[Corollary 2.47]{ar} and the preceding steps: in the presence of either completeness of cocompleteness, local presentability is equivalent to accessibility. 
    
  \item\label{item:th:ijh.loc.pres:pf.monad}: Monadicity along \eqref{eq:fgt2coalg}. We already know that $U_{(I,J)}$ is continuous from step \ref{item:th:ijh.loc.pres:pf.crlim}. It is also easily seen to preserve \emph{directed} \cite[\S 1.A, pp.8-9]{ar} colimits, as these are computable at the vector-space level. Given accessibility, it follows \cite[Theorem 1.66]{ar} that the functor is a right adjoint. It also
    \begin{itemize}
    \item \emph{reflects isomorphisms} \cite[\S V.7]{mcl_2e}, for these are simply the bijective morphisms in both categories of \eqref{eq:fgt2coalg};

    \item and preserves coequalizers for those parallel arrows $f$ and $g$ for which $U_{(I,J)}f$ and $U_{(I,J)}g$ has a \emph{split coequalizer} \cite[Definition 4.4.2]{brcx_hndbk-2} in $\cat{Coalg}$, as in the proof of \cite[\S 2.6, Proposition]{zbMATH05312006}.
    \end{itemize}
    Monadicity follows from (one version of) \emph{Beck's theorem} \cite[Theorem 4.4.4]{brcx_hndbk-2}.

  \item\label{item:th:ijh.loc.pres:pf.monad.bis}: Monadicity along \eqref{eq:fgt2bialg}. We focus on the $J'=\emptyset$ variant of the claim for simplicity:
    \begin{equation*}
      \tensor*[_{(I,J)}]{\cat{Halg}}{}
      \xrightarrow{\quad U_{(,J)}:=\cat{forget}\quad}
      \tensor*[_I]{\cat{Bialg}}{};
    \end{equation*}
    the argument will go through in general.

    This will be another application of Beck's characterization of monadicity. Note first that isomorphism reflection and the preservation of sufficiently well-behaved coequalizers (\cite[conditions (2)(b) and (2)(c) of Theorem 4.4.4]{brcx_hndbk-2}) hold for the left-hand arrow in
    \begin{equation*}
      \begin{tikzpicture}[>=stealth,auto,baseline=(current  bounding  box.center)]
        \path[anchor=base] 
        (0,0) node (l) {$\tensor*[_{(I,J)}]{\cat{Halg}}{}$}
        +(3,.5) node (u) {$\tensor*[_I]{\cat{Bialg}}{}$}
        +(6,0) node (r) {$\cat{Coalg}$}
        +(3,0) node {$\Downarrow\cong$}
        ;
        \draw[->] (l) to[bend left=6] node[pos=.5,auto] {$\scriptstyle U_{(,J)}$} (u);
        \draw[->] (u) to[bend left=6] node[pos=.5,auto] {$\scriptstyle U_I$} (r);
        \draw[->] (l) to[bend right=10] node[pos=.5,auto,swap] {$\scriptstyle U_{(I,J)}$} (r);
      \end{tikzpicture}
    \end{equation*}
    because they do for the other two arrows: Remark~\ref{res:digr}\ref{item:res:digr:bialg2coalg} for the right-hand functor and item~\ref{item:th:ijh.loc.pres:pf.monad} above for the bottom composition. As to \eqref{eq:fgt2bialg} being a right adjoint (\cite[Theorem 4.4.4(2)(a)]{brcx_hndbk-2}), it follows from \cite[Theorem 1.66]{ar}: it is a continuous functor between locally presentable categories, preserving filtered colimits. 
  \end{enumerate}
  This completes the proof of the theorem. 
\end{proof}

In particular, by \cite[Remarks 1.56(1) and 1.56(2)]{ar}:

\begin{corollary}\label{cor:ijh.co.compl}
  The categories $\tensor*[_{(I,J)}]{\cat{Halg}}{}$ are complete and cocomplete.  \qedhere
\end{corollary}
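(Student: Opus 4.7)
The corollary is essentially an immediate consequence of Theorem~\ref{th:ijh.loc.pres}, which establishes that each $\tensor*[_{(I,J)}]{\cat{Halg}}{}$ is locally presentable. The plan is therefore to quote the standard fact that local presentability entails both completeness and cocompleteness.

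Concretely, cocompleteness is part of the very definition of local $\lambda$-presentability in \cite[Definition 1.17]{ar} (recalled verbatim in the Introduction), so it comes for free. Completeness is the content of \cite[Remark 1.56(1)]{ar}: every locally presentable category is complete, the limits being obtained from the underlying accessibility/well-copoweredness machinery. Invoking both simultaneously on the output of Theorem~\ref{th:ijh.loc.pres}\ref{item:th:ijh.loc.pres:all} yields the corollary.

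Since nothing in the above argument used cocommutativity, the same reasoning applied to Theorem~\ref{th:ijh.loc.pres}\ref{item:th:ijh.loc.pres:coc} also shows that $\tensor*[_{(I,J)}]{\cat{Halg}}{^{coc}}$ is (co)complete, even though only the non-cocommutative variant is stated in the corollary. There is no real obstacle here: the entire work has been carried out in establishing Theorem~\ref{th:ijh.loc.pres}, and the corollary is a one-line citation. If one wished to be slightly more self-contained, one could alternatively note that completeness was in fact verified explicitly in step~\ref{item:th:ijh.loc.pres:pf.crlim} of the proof of the theorem (the forgetful functor to $\cat{Coalg}$ creates limits, and $\cat{Coalg}$ is complete), while cocompleteness falls out of accessibility together with the completeness just obtained via \cite[Corollary 2.47]{ar}.
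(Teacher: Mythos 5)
Your argument is correct and matches the paper's own proof, which likewise derives the corollary from the local presentability established in Theorem~\ref{th:ijh.loc.pres} via \cite[Remarks 1.56(1) and 1.56(2)]{ar}. The additional observations about the cocommutative case and the alternative route through limit creation are consistent with the paper but not needed.
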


\section{On and around the category of Hopf braces}\label{se:hbr}

We now focus more closely on $\cat{Hbr}$ and its cocommutative counterpart $\cat{Hbr}^{coc}$. We have the following brace-specific variant of Theorem \ref{th:ijh.loc.pres}. 

\begin{theorem}\label{th:hbr.access}
  \begin{enumerate}[(1),wide]
  \item\label{item:th:hbr.access:hbr} The category $\cat{Hbr}$ is accessible. 

  \item\label{item:th:hbr.access:hbr.coc} Similarly, the category $\cat{Hbr}^{coc}$ is locally presentable and monadic along the forgetful functor
    \begin{equation*}
      \cat{Hbr}^{coc}
      \xrightarrow{\quad U:=\cat{forget}\quad}
      \cat{Coalg}^{coc}.
    \end{equation*}
  \end{enumerate}
\end{theorem}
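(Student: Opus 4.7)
The plan is to realize $\cat{Hbr}$ and $\cat{Hbr}^{coc}$ as full subcategories of $\tensor*[_2]{\cat{Halg}}{}$ and $\tensor*[_2]{\cat{Halg}}{^{coc}}$ respectively, cut out by the single equational compatibility \eqref{eq:hbr}, and then bootstrap the required properties from Theorem~\ref{th:ijh.loc.pres}.

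For part \ref{item:th:hbr.access:hbr}, let $U\colon\tensor*[_2]{\cat{Halg}}{}\to\cat{Vect}$ denote the forgetful functor. The two sides of \eqref{eq:hbr} define natural transformations from the functor $H\mapsto H\otimes H\otimes H$ to $U$, and $\cat{Hbr}$ is precisely the equifier of this parallel pair. Since $\tensor*[_2]{\cat{Halg}}{}$ is locally presentable, hence accessible, by Theorem~\ref{th:ijh.loc.pres}\ref{item:th:ijh.loc.pres:all} (applied with $I=J=\{0,1\}$), and tensor products on $\cat{Vect}$ preserve filtered colimits, accessibility of $\cat{Hbr}$ follows from \cite[Lemma 2.76]{ar}. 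The same equifier argument carried out inside $\tensor*[_2]{\cat{Halg}}{^{coc}}$ gives accessibility of $\cat{Hbr}^{coc}$; since the inclusion $\cat{Hbr}^{coc}\hookrightarrow\tensor*[_2]{\cat{Halg}}{^{coc}}$ creates limits (equational conditions being stable under limits), completeness transfers from Corollary~\ref{cor:ijh.co.compl}, and local presentability of $\cat{Hbr}^{coc}$ follows via \cite[Corollary 2.47]{ar}.

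Monadicity of $U\colon\cat{Hbr}^{coc}\to\cat{Coalg}^{coc}$ is then verified through Beck's criterion \cite[Theorem 4.4.4]{brcx_hndbk-2}, in parallel with step~\ref{item:th:ijh.loc.pres:pf.monad} of the proof of Theorem~\ref{th:ijh.loc.pres}. Right-adjointness is \cite[Theorem 1.66]{ar} applied to $U$, which is continuous (factoring through the monadic $\tensor*[_2]{\cat{Halg}}{^{coc}}\to\cat{Coalg}^{coc}$ along the limit-creating inclusion above) and preserves filtered colimits (all computed at the vector-space level). Isomorphism reflection reduces, one Hopf structure at a time, to the classical fact that bijective Hopf-algebra morphisms have Hopf-algebra inverses. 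Preservation of $U$-split coequalizers exploits absoluteness: the candidate coequalizer $Q$ in $\cat{Coalg}^{coc}$ inherits both multiplications, both antipodes, and the common unit from the source via the split-coequalizer sections, and each equation defining a Hopf brace descends because tensor powers of the split coequalizer remain (split) coequalizers in $\cat{Vect}$ and are in particular epis. The main obstacle will be this last descent step, since \eqref{eq:hbr} is a three-variable identity involving an antipode rather than a familiar binary axiom; the resolution is formally the same, however, once one records that tensor powers of a split coequalizer are themselves split coequalizers, forcing \eqref{eq:hbr} to propagate automatically from the source to the quotient.
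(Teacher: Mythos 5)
Your treatment of part \ref{item:th:hbr.access:hbr} is the paper's own: both realize $\cat{Hbr}$ as the joint equifier of the two natural transformations $\bullet^{\otimes 3}\circ\cat{forget}\Rightarrow\cat{forget}$ (valued in $\cat{Vect}$) given by the two sides of \eqref{eq:hbr}, and invoke the local presentability of $\tensor*[_2]{\cat{Halg}}{}$ from Theorem~\ref{th:ijh.loc.pres} together with \cite[Lemma 2.76]{ar}. The monadicity verification via Beck's theorem is likewise the same in outline.

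The one step that needs repair is your justification of completeness of $\cat{Hbr}^{coc}$. You assert that the inclusion $\cat{Hbr}^{coc}\hookrightarrow\tensor*[_2]{\cat{Halg}}{^{coc}}$ creates limits because ``equational conditions are stable under limits''. As stated, this principle proves too much: it would apply verbatim to $\cat{Hbr}\hookrightarrow\tensor*[_2]{\cat{Halg}}{}$ and yield completeness --- hence, combined with your accessibility argument, local presentability --- of $\cat{Hbr}$, which is precisely what the paper leaves open (Corollary~\ref{cor:hbr.co.compl}(2) and the remark following it on the flawed completeness claim in \cite{AA}). The trouble is that limits in $\tensor*[_2]{\cat{Halg}}{^{coc}}$ are created over $\cat{Coalg}^{coc}$, not over $\cat{Vect}$, so the limit cone $\pi_i\colon L\to H_i$ is jointly monic only against \emph{coalgebra} maps; an identity between two merely linear maps $L^{\otimes 3}\to L$ cannot be tested componentwise. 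The fix is exactly the observation the paper makes: for cocommutative $2$-Hopf algebras \emph{both} sides of \eqref{eq:hbr} are coalgebra morphisms $H^{\otimes 3}\to H$ (in general only the left-hand side is), so the two induced maps out of $L^{\otimes 3}$ live in $\cat{Coalg}^{coc}$ and agree because all their composites with the $\pi_i$ do. With that inserted, your route (accessible plus complete gives locally presentable via \cite[Corollary 2.47]{ar}) goes through and is a mild variant of the paper's, which instead exhibits $\cat{Hbr}^{coc}$ as a variety of finitary algebras internal to $\cat{Coalg}^{coc}$ --- a packaging that delivers local presentability and the right adjoint to $U$ in one stroke, whereas you obtain the adjoint separately from \cite[Theorem 1.66]{ar}.
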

\begin{proof}
  \begin{enumerate}[(I),wide]
  \item \textbf{: Accessibility in both \ref{item:th:hbr.access:hbr} and \ref{item:th:hbr.access:hbr.coc}.} The proof strategy is the same as in Theorem \ref{th:ijh.loc.pres} (and the various results in \cite{zbMATH05312006,porst_formal-1} that proof references). Focusing on $\cat{Hbr}$ to fix the notation, the defining equation \eqref{eq:hbr} realizes that category as the equifier of two natural transformations
    \begin{equation*}
      \begin{tikzpicture}[>=stealth,auto,baseline=(current  bounding  box.center)]
        \path[anchor=base] 
        (0,0) node (l) {$\tensor*[_2]{\cat{Halg}}{}$}
        +(6,0) node (r) {$\cat{Vect}$,}
        +(2,0) node (ld) {$\Downarrow$}
        +(4,0) node (rd) {$\Downarrow$}
        ;
        \draw[->] (l) to[bend left=20] node[pos=.5,auto] {$\scriptstyle \bullet^{\otimes 3}\circ\cat{forget}$} (r);
        \draw[->] (l) to[bend right=20] node[pos=.5,auto,swap] {$\scriptstyle \cat{forget}$} (r);
      \end{tikzpicture}
    \end{equation*}
    one given by the left-hand side of \eqref{eq:hbr} and one by the right-hand side. The conclusion follows from the known accessibility of $\tensor*[_2]{\cat{Halg}}{}$ (indeed, even local presentability: Theorem \ref{th:ijh.loc.pres}) and \cite[Lemma 2.76]{ar}. 

  \item \textbf{: Local presentability and adjointness in \ref{item:th:hbr.access:hbr.coc}.} For cocommutative 2-Hopf algebras both sides of \eqref{eq:hbr} constitute coalgebra morphisms $H^{\otimes 3}\to H$ (in general, only the left-hand side does). It follows that $\cat{Hbr}^{coc}$ is a \emph{variety of (finitary) algebras} \cite[\S 3.A]{ar} internal to the category of cocommutative coalgebras, and local presentability (given that of $\cat{Coalg}^{coc}$ \cite[\S 2.7, Proposition and Remark 1]{zbMATH05312006}) is no more difficult to prove than over $\cat{Set}$ \cite[Corollary 3.7]{ar}. By the same token, the forgetful functor $U$ is a right adjoint. 

  \item \textbf{: Monadicity.} As in step \ref{item:th:ijh.loc.pres:pf.monad} in the proof of Theorem \ref{th:ijh.loc.pres}: $U$ reflects isomorphisms (bijective morphisms on both sides) and preserves coequalizers for pairs whose $\cat{Coalg}^{coc}$-coequalizer is split.
  \end{enumerate}
\end{proof}

There is a consequent parallel to Corollary \ref{cor:ijh.co.compl}; the first statement follows from \cite[Remarks 1.56(1) and 1.56(2)]{ar}, while the second (given the accessibility ensured by Theorem \ref{th:hbr.access}\ref{item:th:hbr.access:hbr}) follows from \cite[Corollary 2.47]{ar}.

\begin{corollary}\label{cor:hbr.co.compl}
  \begin{enumerate}[(1),wide]
  \item The category $\cat{Hbr}^{coc}$ is complete and cocomplete.

  \item For $\cat{Hbr}$ the following conditions are equivalent.
    \begin{enumerate}[(a),wide]
    \item\label{item:cor:hbr.co.compl:compl} completeness;

    \item\label{item:cor:hbr.co.compl:cocompl} cocompleteness;

    \item\label{item:cor:hbr.co.compl:lp} local presentability.  \qedhere
    \end{enumerate}    
  \end{enumerate}
\end{corollary}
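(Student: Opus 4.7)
The corollary is set up to be a direct consequence of the preceding Theorem~\ref{th:hbr.access}, together with standard characterizations of local presentability in the accessible setting. The plan is therefore to recognize that neither part requires any new construction: both are one-step deductions from results in \cite{ar}.

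For part (1), I would invoke Theorem~\ref{th:hbr.access}\ref{item:th:hbr.access:hbr.coc}, which already asserts that $\cat{Hbr}^{coc}$ is locally presentable. The definition of local presentability bakes in cocompleteness, and completeness of a locally presentable category is \cite[Remark 1.56(1)]{ar} (with the cocompleteness half being \cite[Remark 1.56(2)]{ar}, or already part of the definition). So no further work is needed.

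For part (2), the input is the accessibility of $\cat{Hbr}$ from Theorem~\ref{th:hbr.access}\ref{item:th:hbr.access:hbr}. The key general fact is \cite[Corollary 2.47]{ar}: for an accessible category, cocompleteness, completeness, and local presentability are equivalent. This immediately yields the equivalence \ref{item:cor:hbr.co.compl:compl}$\Leftrightarrow$\ref{item:cor:hbr.co.compl:cocompl}$\Leftrightarrow$\ref{item:cor:hbr.co.compl:lp}.

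There is no substantive obstacle; the only thing to verify is that the cited statements from \cite{ar} apply as advertised, and in particular that the accessibility hypothesis is genuinely used in both directions of the equivalence in part (2) (it is: without accessibility, an arbitrary complete or cocomplete category need not be locally presentable). All the actual labor has been carried out in Theorem~\ref{th:hbr.access}; the corollary merely repackages its conclusions.
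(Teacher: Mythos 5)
Your proposal is correct and follows exactly the paper's own argument: part (1) is read off from the local presentability of $\cat{Hbr}^{coc}$ established in Theorem~\ref{th:hbr.access}\ref{item:th:hbr.access:hbr.coc} via \cite[Remarks 1.56(1) and 1.56(2)]{ar}, and part (2) is the specialization of \cite[Corollary 2.47]{ar} to the accessible category $\cat{Hbr}$ from Theorem~\ref{th:hbr.access}\ref{item:th:hbr.access:hbr}. No gaps; this matches the paper's proof.
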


\begin{remark}
Corollary~\ref{cor:hbr.co.compl} provides the appropriate context to correct an inaccurate claim made in the literature by the first named author. \cite[Theorem 3.1]{AA} states that the category of Hopf braces is complete and although we do not know whether the statement itself is correct, we point out that there is a flaw in the proof of the aforementioned result. To be more precise, the construction of products provided in the proof of \cite[Theorem 3.1]{AA} works only for cocommutative Hopf braces and therefore it only implies the completeness of said category. 
\end{remark}

\section{Colimits and free objects in the category of cocommutative Hopf braces}\label{se:colim}

This section is devoted to some relatively explicit constructions for various adjoints and / or colimits involving Hopf braces and related categories, in the spirit of Takeuchi's construction \cite[\S 1]{zbMATH03344702} of the free Hopf algebra on a coalgebra, or Pareigis' description \cite[Theorem 2.6.3]{par_qg-ncg} of the free Hopf algebra on a bialgebra. 

We start by freely adjoining antipodes to a 2-bialgebra. The construction lies ``upstream'' to constructing coproducts of 2-Hopf algebras (which Corollary~\ref{cor:ijh.co.compl} ensures exist): applying a left adjoint of
\begin{equation*}
  \tensor*[_2]{\cat{Halg}}{}
  \xrightarrow{\quad\cat{forget}\quad}
  \tensor*[_2]{\cat{Bialg}}{}
\end{equation*}
to a coproduct of 2-bialgebras will produce a coproduct of 2-Hopf algebras. 

Note also that Proposition \ref{pr:ladj} below is a consequence of Theorem \ref{th:ijh.loc.pres}; the idea here is to give some sense of what the claimed adjoint functors look like. 

\begin{proposition}\label{pr:ladj}
  For sets $J'\subseteq J\subseteq I$ the forgetful functors
  \begin{eqnarray*}
  \tensor*[_{(I,J)}]{\cat{Halg}}{}
    &\xrightarrow{\quad\cat{forget}\quad}&
    \tensor*[_{(I,J')}]{\cat{Halg}}{}\\
       \tensor*[_{(I,J)}]{\cat{Halg}}{}^{coc}
    &\xrightarrow{\quad\cat{forget}\quad}&
    \tensor*[_{(I,J')}]{\cat{Halg}}{}^{coc}
  \end{eqnarray*}
  have left adjoints. 
\end{proposition}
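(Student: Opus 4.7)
The first observation is that existence of the claimed left adjoints is essentially formal: the forgetful functors in the statement lie between locally presentable categories (Theorem \ref{th:ijh.loc.pres}), preserve all small limits (limits in $\tensor*[_{(I,J)}]{\cat{Halg}}{}$ are created from the underlying coalgebras, and the $(J\setminus J')$-indexed antipodes are uniquely determined on the limit by those of the terms of the diagram), and preserve filtered colimits (computable at the coalgebra level). The adjoint functor theorem \cite[Theorem 1.66]{ar} then delivers the left adjoints in both the general and cocommutative settings.

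To flesh this out with an explicit construction, the plan is to iterate one antipode at a time, reducing to $J = J' \sqcup \{j\}$. Given an $(I,J')$-Hopf algebra $B$, mimic Takeuchi's construction \cite[\S 1]{zbMATH03344702} of the free Hopf algebra on a bialgebra, but carried out internally to $\tensor*[_{(I,J')}]{\cat{Halg}}{}$. Form the coproduct (which exists by Corollary \ref{cor:ijh.co.compl} applied to $\tensor*[_{(I,J')}]{\cat{Halg}}{}$) of an $\NN$-indexed family of copies of $B$ in which the $j$-indexed multiplication is alternately opposed and the comultiplication correspondingly co-opposed, amalgamated along maps that play the role of iterated powers of a prospective antipode $S_j^n$. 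Call the resulting $(I,J')$-Hopf algebra $\widehat{B}$. The canonical transition $B_0 \to B_1$ furnishes a formal operator $S_j$ on $\widehat{B}$ which is antimultiplicative for $m_j$ and a coalgebra anti-map by design, but which need not yet satisfy the antipode axioms. These are imposed by passing to the quotient $\widehat{B}/\widetilde{I_0}$, where $I_0$ is the coideal generated by the images of $m_j(S_j\otimes\id)\Delta-\eta\varepsilon$ and $m_j(\id\otimes S_j)\Delta-\eta\varepsilon$, and $\widetilde{I_0}$ is its closure under all $I$-indexed multiplications and $J'$-indexed antipodes, in the manner of Construction \ref{con:id.chn} suitably generalized from the $2$-Hopf setting.

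The main obstacle is verifying the universal property of $\widehat{B}/\widetilde{I_0}$. Given any morphism $B\to K$ in $\tensor*[_{(I,J')}]{\cat{Halg}}{}$ with $K$ an $(I,J)$-Hopf algebra, one extends it to $\widehat{B}$ by sending the $n$-th transitional copy of $B$ to the image of the $n$-th iterated power of the genuine $j$-antipode of $K$; because $S_j^K$ really is an antipode in $K$, the generators of $I_0$ lie in the kernel, and the closure operation of Construction \ref{con:id.chn} certifies that $\widetilde{I_0}$ is the smallest $(I,J')$-Hopf-ideal containing $I_0$, hence also killed by the extension. In the cocommutative case $B^{cop}=B$, so the alternating chain collapses, Construction \ref{con:id.chn} applies in its original form, and the construction reduces to a handful of pushouts whose explicit description will also serve Proposition \ref{pr:incadj}.
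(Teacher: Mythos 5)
Your first paragraph already proves the proposition as stated, and it does so by exactly the route the paper itself flags as available: the statement is a formal consequence of Theorem \ref{th:ijh.loc.pres} (whose proof, in the monadicity step for \eqref{eq:fgt2bialg}, invokes \cite[Theorem 1.66]{ar} precisely as you do). The paper's own proof of Proposition \ref{pr:ladj} has a different purpose and a different shape: it gives an explicit \emph{iterative} construction, alternating between (a) freely adjoining the missing antipode to one multiplication while ignoring the others (Pareigis' free Hopf algebra on a bialgebra \cite[Theorem 2.6.3]{par_qg-ncg}), (b) freely restoring the remaining multiplicative structure, and (c) forcing the composite from the original object to be a morphism for all multiplications, then passing to the directed colimit $\varinjlim_n B_n$. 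So as a proof of existence your argument is fine and arguably cleaner; what it does not deliver is the concrete description that is the point of the paper's statement.

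Your attempted explicit construction, however, has genuine gaps. First, the alternating copies in which only the $j$-th multiplication is opposed and the comultiplication co-opposed need not be objects of $\tensor*[_{(I,J')}]{\cat{Halg}}{}$: for $i\in J'$ an antipode for $(B,m_i,\Delta^{cop})$ is an inverse of $S_i$ under convolution, i.e. it exists only when $S_i$ is bijective, which is not automatic; hence the coproduct you invoke via Corollary \ref{cor:ijh.co.compl} is not available for the family you describe (Takeuchi/Pareigis avoid this by opposing \emph{both} structures, but opposing all multiplications would wrongly force $S_j$ to be anti-multiplicative for every $m_i$). Second, the single transition $B_0\to B_1$ does not ``furnish'' an operator $S_j$ on the whole coproduct: the universal property of the coproduct in $\tensor*[_{(I,J')}]{\cat{Halg}}{}$ only produces maps that are multiplicative for every $m_i$, so manufacturing an $m_j$-anti-multiplicative, comultiplication-reversing endomorphism requires mapping into an op/cop twist that, by the first point, may again fail to lie in the category. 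Third, for $S_j$ to descend to $\widehat{B}/\widetilde{I_0}$ you also need $\widetilde{I_0}$ to be stable under the prospective $S_j$, which the closure of Construction \ref{con:id.chn} (taken only under the $I$-indexed multiplications and $J'$-indexed antipodes) does not by itself guarantee and which you do not address. Finally, the remark that in the cocommutative case ``the alternating chain collapses'' because $B^{cop}=B$ is not right: the odd copies also carry $m_j^{op}$, which differs from $m_j$ even for cocommutative objects, so the chain does not degenerate.
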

\begin{proof}
  In order to avoid notational unpleasantness, we illustrate the construction by focusing on constructing the left adjoint to
  \begin{equation*}
    \tensor*[_{(2=\{0,1\},\{0\})}]{\cat{Halg}}{}
    \xrightarrow{\quad\cat{forget}\quad}
    \tensor*[_{(2,\emptyset)}]{\cat{Halg}}{}
    =
    \tensor*[_2]{\cat{Bialg}}{}.
  \end{equation*}
  The goal, in other words, is to append \emph{one} antipode freely to a 2-bialgebra. Note that we have identified $2$ with the set $\{0,1\}$ (as one commonly does in set-theoretic foundations \cite[Definition I.7.13]{kun_st_2nd_1983}); the $0^{th}$ and $1^{st}$ multiplication will be our old $\cdot$ and $\diamond$, and it is the former that must acquire an antipode. The construction will be recursive.
  \begin{itemize}[wide]
  \item Starting with a 2-bialgebra $(B,\cdot,\diamond)$, set $B_0:=B$ and construct the enveloping Hopf algebra $B_0\to B_1$ for $\cdot$ (ignoring $\diamond$ entirely), as in \cite[Theorem 2.6.3]{par_qg-ncg}. We abuse notation in denoting the multiplication of $B_1$ by $\cdot$ again. 

  \item Let $B_1\to B_2$ be the universal map from $(B_1,\cdot)$ into a 2-bialgebra $(B_2,\cdot,\diamond)$. In other words, this is the $B_1$-component of the unit of the adjunction built on the functor
    \begin{equation*}
      \tensor*[_2]{\cat{Bialg}}{}
      \xrightarrow{\quad\cat{forget}\quad}
      \cat{Bialg}
    \end{equation*}
    forgetting multiplication 1. 

  \item At this point we have a map $B_0\xrightarrow{\varphi} B_2$ between two 2-bialgebras, in principle respecting only multiplication 0. Consider the universal 2-bialgebra morphism $B_2\xrightarrow{\varphi'} B_3$ satisfying
    \begin{equation*}
      \begin{aligned}
        \varphi'(x\cdot y)
        &=\varphi'(x)\cdot \varphi'(y)
          ,\quad
          \forall x,y\in B_2
          \quad\text{and}\quad\\
        (\varphi'\varphi)(x\diamond y)
        &=(\varphi'\varphi)(x)\diamond (\varphi'\varphi)(y)
          ,\quad
          \forall x,y\in B_1.
      \end{aligned}      
    \end{equation*}
    By construction, the composition $B_0\xrightarrow{\varphi'\varphi}B_3$ is a 2-bialgebra morphism. 
    
  \item The entire procedure can now be reiterated, with $B_3$ in place of $B_0$: $B_3\to B_4$ is the universal map into a Hopf algebra with underlying multiplication $\cdot$, etc. 
  \end{itemize}
  That the resulting map $B=B_0\to \varinjlim_n B_n$ is the desired universal 2-bialgebra morphism into an object in $\tensor*[_{(2,\{0\})}]{\cat{Halg}}{}$ is immediate from the construction.
\end{proof}

Turning to cocommutative Hopf braces, we have the following analogous result.

\begin{proposition}\label{pr:incadj}
  The inclusion functor $I \colon \cat{Hbr}^{coc} \to \tensor*[_2]{\cat{Halg}}{^{coc}}$ has a left adjoint.
\end{proposition}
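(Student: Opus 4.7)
The plan is to construct the left adjoint explicitly, in parallel with the recursive method of Proposition~\ref{pr:ladj}: given a cocommutative 2-Hopf algebra $H$, I would produce the universal cocommutative Hopf brace under $H$ as a quotient $H/\widetilde{J}$ by the smallest Hopf brace ideal forcing the compatibility \eqref{eq:hbr} to hold.

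Write $\mu,\nu:H^{\otimes 3}\to H$ for the two maps defined respectively by the left- and right-hand sides of \eqref{eq:hbr}. As invoked in the proof of Theorem~\ref{th:hbr.access}, cocommutativity of $H$ renders both $\mu$ and $\nu$ coalgebra morphisms. The first key step is to observe that $J:=\mathrm{span}\{\mu(w)-\nu(w):w\in H^{\otimes 3}\}$ is a coideal. Counitality is immediate from $\varepsilon\circ\mu=\varepsilon\circ\nu$; for the coproduct, a routine bookkeeping valid for any difference of parallel coalgebra maps yields
\begin{equation*}
  \Delta\bigl(\mu(w)-\nu(w)\bigr)=\mu(w_{(1)})\otimes\bigl(\mu(w_{(2)})-\nu(w_{(2)})\bigr)+\bigl(\mu(w_{(1)})-\nu(w_{(1)})\bigr)\otimes\nu(w_{(2)}),
\end{equation*}
which visibly lies in $H\otimes J+J\otimes H$.

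With $J$ established as a coideal, Construction~\ref{con:id.chn} applies verbatim to produce a Hopf brace ideal $\widetilde{J}\supseteq J$. The quotient $H/\widetilde{J}$ inherits the structure of a cocommutative 2-Hopf algebra, and satisfies \eqref{eq:hbr}: the surjectivity of $\pi^{\otimes 3}:H^{\otimes 3}\to (H/\widetilde{J})^{\otimes 3}$ combined with $\pi\circ\mu=\pi\circ\nu$ forces the induced maps $(H/\widetilde{J})^{\otimes 3}\to H/\widetilde{J}$ to coincide. Thus $H/\widetilde{J}\in\cat{Hbr}^{coc}$, and the canonical projection will serve as the unit of the adjunction at $H$.

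Verification of universality is then a matter of unwinding definitions: any 2-Hopf algebra morphism $f:H\to K$ with $K\in \cat{Hbr}^{coc}$ satisfies $f\circ\mu=f\circ\nu$ because $K$ obeys \eqref{eq:hbr}, so $\ker f\supseteq J$; since $\ker f$ is itself a Hopf brace ideal of $H$, it must contain $\widetilde{J}$ by minimality, and $f$ factors uniquely through $H/\widetilde{J}$. The main subtle point — and the only genuine potential obstacle — lies in confirming that Construction~\ref{con:id.chn} in fact produces the \emph{smallest} Hopf brace ideal containing a given coideal $J$; this follows by induction along the chain \eqref{eq:1}, since any such ideal is forced to contain each $J_n$ in succession by closure under the two antipodes and multiplications.
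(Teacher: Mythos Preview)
Your proof is correct and follows essentially the same route as the paper's: define the coideal spanned by differences of the two sides of \eqref{eq:hbr} (using cocommutativity to ensure both are coalgebra maps), apply Construction~\ref{con:id.chn} to obtain a Hopf brace ideal, and verify that the resulting quotient has the required universal property. Your explicit treatment of the minimality of $\widetilde{J}$ is a small elaboration, but the paper handles this point in the same inductive manner (cf.\ the coequalizer discussion in Section~\ref{se:colim}).
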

\begin{proof}
  Let $\bigl(H, \overline{\cdot},\overline{\diamond}, \overline{1}, \overline{\Delta}, \overline{\varepsilon}, \overline{S}, \overline{T}\bigl)$ be an object in $\tensor*[_2]{\cat{Halg}}{^{coc}}$ and define the $k$-linear maps $f$, $g \colon H^{\otimes 3} \to H$ on $x \otimes y \otimes z \in H^{\otimes 3}$ by
  \begin{align*}
    f(x \otimes y \otimes z) &= x\, \overline{\diamond}\, (y\, \overline{\cdot}\, z)\\
    g(x \otimes y \otimes z) &= \bigl(x_{(1)}\, \overline{\diamond}\, y\bigl)\, \overline{\cdot} \,\overline{S}(x_{(2)})\, \overline{\cdot} \, \bigl(x_{(3)} \, \overline{\diamond}\, z\bigl)
  \end{align*}
  The cocommutativity assumption on $H$ implies that both $f$ and $g$ are coalgebra maps and therefore the span $J_{0}$ of $\{f(x \otimes y \otimes z) - g(x \otimes y \otimes z) ~|~ x \otimes y \otimes z \in H^{\otimes 3}\}$ is in fact a coideal in $(H,\, \overline{\Delta},\, \overline{\varepsilon})$ (see, for example, \cite[Exercise 2.1.29]{rad}).

  Denoting by $J$ the Hopf brace ideal $\widetilde{J_0}\le H$ defined in Construction~\ref{con:id.chn}, the quotient space $H/J$ is again an object in $\tensor*[_2]{\cat{Halg}}{^{coc}}$ with the induced structures denoted by $\bigl(\cdot, \diamond, 1,\, \Delta,\, \varepsilon,\, S,\, T\bigl)$. Furthermore, the canonical projection $\pi_{H} \colon H \to H/J$ is a morphism in $\tensor*[_2]{\cat{Halg}}{^{coc}}$ and we obtain:
  \begin{align*}
    0 &= \pi_{H}\Bigl(x\, \overline{\diamond}\, (y\, \overline{\cdot}\, z) - \bigl(x_{(1)}\, \overline{\diamond}\, y\bigl)\, \overline{\cdot} \,\overline{S}(x_{(2)})\, \overline{\cdot} \, \bigl(x_{(3)} \, \overline{\diamond}\, z\bigl)\Bigl)\\
      &= \pi_{H}(x) \, \diamond\, (\pi_{H}(y)\, \cdot\, \pi_{H}(z)) - \bigl(\pi_{H}(x_{(1)})\, \diamond\, \pi_{H}(y)\bigl)\, \cdot \,S(\pi_{H}(x_{(2)}))\, \cdot\, \bigl(\pi_{H}(x_{(3)}) \,\diamond\, \pi_{H}(z)\bigl)
  \end{align*}
  for all $x$, $y$, $z \in H$. This shows that \eqref{eq:hbr} holds in $H/J$ and therefore $\bigl(H/J, \cdot, \diamond, 1, \Delta, \varepsilon, S, T\bigl)$ is a Hopf brace.
  
  Consider now $h \colon H \to B$ to be a morphism in $\tensor*[_2]{\cat{Halg}}{^{coc}}$, where $\bigl(B, \cdot_{B}, \diamond_{B}, 1, \Delta_{B}, \varepsilon_{B}, S_{B}, T_{B}\bigl)$ is a Hopf brace. Then $J_{0} \subset {\rm ker}\, h$; indeed, for all $x$, $y$, $z \in H$ we have:
  \begin{align*}
    & h\Bigl(x\, \overline{\diamond}\, (y\, \overline{\cdot}\, z) - \bigl(x_{(1)}\, \overline{\diamond}\, y\bigl)\, \overline{\cdot} \,\overline{S}(x_{(2)})\, \overline{\cdot} \, \bigl(x_{(3)} \, \overline{\diamond}\, z\bigl)\Bigl)\\
    & =  h(x)\, \diamond_{B}\, (h(y)\, \cdot_{B}\, h(z)) -  \bigl(h(x_{(1)})\, \diamond_{B}\, h(y)\bigl)\, \cdot_{B} \, S_{B}(h(x_{(2)}))\, \cdot_{B} \, \bigl(h(x_{(3)}) \, \diamond_{B}\, h(z)\bigl)\\
    & = 0
  \end{align*}
  where the last equality holds because $B$ is a Hopf brace. Therefore, $J \subset {\rm ker}\, h$ and there exists a unique morphism $\overline{h} \colon H/J \to B$ in $\tensor*[_2]{\cat{Halg}}{^{coc}}$ (and in the full subcategory $\cat{Hbr}^{coc}$) such that the following diagram is commutative:
  \begin{eqnarray*}
    \xymatrix {&  {H} \ar[dr]_{h}\ar[r]^-{\pi_{H}} & {H/J}\ar[d]^-{\overline{h}}\\
               &  {} & {B}}
  \end{eqnarray*}
  To conclude, the pair $(H/J, \pi)$ is a universal arrow from $H$ to $I$ and therefore $I$ has a left adjoint by (\cite[\S III.1, \S IV Theorem 2]{mcl_2e}). 
\end{proof}

We conclude the discussion on free objects with the following straightforward consequence of Proposition~\ref{pr:ladj} and Proposition~\ref{pr:incadj}:

\begin{corollary}
Both forgetful functors $U \colon \cat{Hbr}^{coc} \to \cat{Halg}^{coc}$ have left adjoints.
\end{corollary}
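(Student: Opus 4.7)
The plan is to factor each of the two forgetful functors through the inclusion $I \colon \cat{Hbr}^{coc} \to \tensor*[_2]{\cat{Halg}}{^{coc}}$ from Proposition~\ref{pr:incadj}. Writing $V_{0}, V_{1} \colon \tensor*[_2]{\cat{Halg}}{^{coc}} \to \cat{Halg}^{coc}$ for the two functors that forget, respectively, the second or the first of the two Hopf algebra structures, the two $U$'s are precisely $V_{0} \circ I$ and $V_{1} \circ I$. Since compositions of left adjoints are left adjoints, it suffices to equip each of $I$, $V_{0}$, $V_{1}$ with one. For $I$, this is immediate from Proposition~\ref{pr:incadj}.

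For the functors $V_{i}$, my plan is to invoke the adjoint functor theorem in the locally presentable setting \cite[Theorem 1.66]{ar}: between such categories, a functor has a left adjoint if and only if it is continuous and preserves $\lambda$-filtered colimits for some regular cardinal $\lambda$. That both $\tensor*[_2]{\cat{Halg}}{^{coc}}$ and $\cat{Halg}^{coc}$ are locally presentable is part of Theorem~\ref{th:ijh.loc.pres}\ref{item:th:ijh.loc.pres:coc}. Continuity of $V_{i}$ follows because in both source and target the forgetful functors down to $\cat{Coalg}^{coc}$ create limits (step~\ref{item:th:ijh.loc.pres:pf.crlim} in the proof of Theorem~\ref{th:ijh.loc.pres}), and the underlying coalgebra of a limit is manifestly the same whether one remembers one or two compatible Hopf-algebra structures on it. Preservation of filtered colimits is analogous: these are computed at the vector-space level in both categories.

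No substantive obstacle is anticipated: the corollary is essentially a packaging exercise combining Proposition~\ref{pr:incadj}, the local presentability obtained in Theorem~\ref{th:ijh.loc.pres}, and a standard adjoint functor theorem. The only mildly delicate point is verifying the factorization $U = V_{i} \circ I$, which is however immediate from unwinding the definitions of $I$ and the $V_{i}$.
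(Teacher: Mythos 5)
Your proof is correct, and it shares the paper's basic skeleton: both arguments factor $U$ as $V_i\circ I$ through $\tensor*[_2]{\cat{Halg}}{^{coc}}$ and invoke Proposition~\ref{pr:incadj} for the inclusion $I$. Where you diverge is in the treatment of the functors $V_i$ that forget one of the two Hopf structures. The paper presents the corollary as a consequence of Proposition~\ref{pr:ladj} together with Proposition~\ref{pr:incadj}, i.e.\ it intends the left adjoint of $V_i$ to be assembled from the \emph{explicit} constructions of Section~\ref{se:colim} (freely adjoining a second multiplication at the bialgebra level and then antipodes via Proposition~\ref{pr:ladj}); note that Proposition~\ref{pr:ladj} by itself only covers forgetting antipodes for a fixed index set of multiplications, so the paper is implicitly also using the free $2$-bialgebra on a bialgebra that appears inside that proposition's proof. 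You instead apply \cite[Theorem 1.66]{ar} directly to $V_i$, checking continuity via creation of limits over $\cat{Coalg}^{coc}$ (step \ref{item:th:ijh.loc.pres:pf.crlim} of Theorem~\ref{th:ijh.loc.pres}) and preservation of filtered colimits at the vector-space level, with local presentability supplied by Theorem~\ref{th:ijh.loc.pres}\ref{item:th:ijh.loc.pres:coc}. This is cleaner and in fact avoids the small mismatch just noted; the trade-off is that it is purely existential, whereas the whole point of Section~\ref{se:colim} (as the paper says just before Proposition~\ref{pr:ladj}) is to exhibit what the free objects look like. Your factorization $U=V_i\circ I$ and the composability of left adjoints are unobjectionable, so there is no gap.
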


We next turn to explicitly describing colimits in $\cat{Hbr}^{coc}$ (whose existence is guaranteed by Corollary~\ref{cor:ijh.co.compl}). In light of (the dual of) \cite[\S V.2, Theorem 1]{mcl_2e}, we restrict to describing coequalizers and coproducts.

\subsection*{Coequalizers in the category of (cocommutative) Hopf braces}

Let $f$, $g \colon A \to B$ be two morphisms of Hopf braces and consider $I_{0}$ to be the vector space generated by the set $\{f(a) - g(a) ~|~ a \in A\}$. It can be seen by a simple computation that $I_{0}$ is in fact a coideal in $B$ (see, for example, \cite[Exercise 2.1.29]{rad}) and that $S(I_{0}) \subseteq I_{0}$ and $T(I_{0}) \subseteq I_{0}$, where $S$ and $T$ denote the antipodes of the first and the second Hopf algebra structures of $B$, respectively.


Consider the Hopf brace $B/I$ that is the quotient of $B$ by the Hopf brace ideal $I:=\widetilde{I_0}$ of Construction~\ref{con:id.chn}. The proof will be finished once we show that the pair $\bigl(B/I, \pi\bigl)$ is the coequalizer of $f$ and $g$, where $\pi \colon B \to B/I$ is the projection map. Indeed, let $h \colon B \to H$ be another Hopf brace morphism such that $h \circ f = h \circ g$. This shows that $I_{0} \subset {\rm ker}\, h$ and since $h$ is a Hopf brace morphism it can be easily proved using an inductive argument that $I_{n} \subset {\rm ker}\, h$ for all $n \in \NN^{*}$. Therefore $I \subset {\rm ker}\, h$ and we obtain a unique morphism of Hopf braces $h' \colon B/I \to H$ such that $h' \circ \pi = h$, as desired.

\subsection*{Coproducts in the category of cocommutative Hopf braces}

The construction of coproducts in $\cat{Hbr}^{coc}$ will be given in steps starting with the corresponding constructions being performed first in the categories $\tensor*[_2]{\cat{Bialg}}{}^{coc}$ and $\tensor*[_2]{\cat{Halg}}{^{coc}}$.
\begin{enumerate}[(1),wide]
\item Coproducts in $\tensor*[_2]{\cat{Bialg}}{}$ and $\tensor*[_2]{\cat{Bialg}}{}^{coc}$.
  
  Consider $\bigl(B_{l},\, \cdot_{l}, \diamond_{l}, 1_{l}, \Delta_{l},\, \varepsilon_{l}\bigl)_{l \in I}$ to be a family of objects in $\tensor*[_2]{\cat{Bialg}}{}$. To start with, denote by $\Bigl(\bigl(\bigoplus_{l \in I} B_{l},\, \overline{\Delta},\, \overline{\varepsilon}\bigl),\, (j_{l})_{l \in I}\Bigl)$ the coproduct in $\cat{Coalg}$ of the underlying coalgebras, where $\bigoplus_{l \in I} B_{l}$ denotes the direct sum of this family in ${\cat{VECT}}$ and $j_{t} \colon B_{t} \to \bigoplus_{l \in I} B_{l}$ are the canonical injections for all $t \in I$. Recall (\cite[Exercises 2.1.18 and 2.1.35 ]{rad} that the comultiplication $\overline{\Delta}$ and the counit $\overline{\varepsilon}$ are the unique linear maps such that the following hold for all $t \in I$:
  \begin{align*}
    \overline{\Delta} \circ j_{t} &= (j_{t} \otimes j_{t})\circ \Delta_{t} \\
    \overline{\varepsilon} \circ j_{t} &= \varepsilon_{t}
  \end{align*}
  Now let $\Bigl(\bigl(F\bigl(\bigoplus_{l \in I}B_{l}\bigl),\, \cdot, \diamond, 1\bigl),\, i\Bigl)$ denote the free $2$-algebra on the vector space $\bigoplus_{l \in I} B_{l}$ (\cite[Section 5]{LR}), where $i\colon \bigoplus_{l \in I}B_{l} \rightarrow F\bigl(\bigoplus_{l \in I}B_{l}\bigl)$ is a linear map. Furthermore, it can be easily seen, as in \cite[Theorem 5.3.1]{rad}, that the free $2$-algebra $F\bigl(\bigoplus_{l \in I}B_{l}\bigl)$ is in fact an object in $\tensor*[_2]{\cat{Bialg}}{}$ with comultiplication $\Delta$ and counit $\varepsilon$ given by the unique morphisms in $\tensor*[_2]{\cat{Alg}}{}$ such that the following hold:  
  \begin{align}
    \Delta \circ i &= (i \otimes i) \circ \overline{\Delta} \label{eq4.1}\\
    \varepsilon \circ i &= \overline{\varepsilon} \label{eq4.2}
  \end{align}
  In particular,~(\ref{eq4.1}) and~(\ref{eq4.2}) imply that $i\colon \bigoplus_{l \in I}B_{l} \rightarrow F\bigl(\bigoplus_{l \in I}B_{l}\bigl)$ is a coalgebra map. Throughout, for all $t \in I$, we denote by $u_{t}$ the coalgebra map $i \circ j_{t} \colon \bigl(B_{t},\, \Delta_{t}, \, \varepsilon_{t}\bigl) \to \Bigl(F\bigl(\bigoplus_{l \in I} B_{l}\bigl),\, \Delta,\, \varepsilon\Bigl)$. Consider now the vector space $J_{0}$ generated by the set $J \cup J' \cup J''$, where:
  \begin{eqnarray*}
    J &=& \{1 - u_{t}(1_{t})  ~ |~ t \in I \}\\
    J' &=& \{ u_{t}(x\, \cdot _{t} \,x')-u_{t}(x)\, \cdot\, u_{t}(x') ~ |~
           x, x' \in B_{t};\, t \in I \}\\
    J'' &=& \{ u_{t}(x\, \diamond _{t} \,x')- u_{t}(x)\, \diamond\, u_{t}(x') ~ |~
            x, x' \in B_{t};\, t \in I \}
  \end{eqnarray*}
  As $u_{t}$ is a coalgebra map and $B_{t}$ is a bialgebra for all $t \in I$ and $F\bigl(\bigoplus_{l \in I} B_{l}\bigl)$ is in $\tensor*[_2]{\cat{Bialg}}{}$, it can be easily seen that $J_{0}$ is in fact a coideal in $\Bigl(F\bigl(\bigoplus_{l \in I} B_{l}\bigl),\, \Delta, \, \varepsilon\Bigl)$. Next, we construct a family of coideals in $\Bigl(F\bigl(\bigoplus_{l \in I} B_{l}\bigl),\, \Delta, \, \varepsilon\Bigl)$ as follows:
  \begin{itemize}[wide]
  \item $J_{1}$ is the ideal generated by $J_{0}$ in $\Bigl(F
    \bigl(\bigoplus_{l \in I}B_{l}\bigl), \cdot, 1\Bigl)$
  \item $J_{2}$ is the ideal generated by $J_{1}$ in $\Bigl(F
    \bigl(\bigoplus_{l \in I}B_{l}\bigl), \diamond, 1\Bigl)$
  \item $J_{3}$ is the ideal generated by $J_{2}$ in $\Bigl(F
    \bigl(\bigoplus_{l \in I}B_{l}\bigl), \cdot, 1\Bigl)$
  \end{itemize}
  Continuing this process we obtain a chain of coideals:
  \begin{eqnarray*}
    J_{0} \subset  J_{1} \subset J_{2} \subset J_{3} \subset \cdots \subset J_{n} \subset \cdots
  \end{eqnarray*}
  The union $L = \cup_{i \in \NN}\, J_{i}$ can be easily seen to be a coideal in $\Bigl(F\bigl(\bigoplus_{l \in I} B_{l}\bigl),\, \Delta, \, \varepsilon\Bigl)$ and by construction is an ideal in both $\Bigl(F
  \bigl(\bigoplus_{l \in I}B_{l}\bigl), \cdot, 1\Bigl)$ and $\Bigl(F
  \bigl(\bigoplus_{l \in I}B_{l}\bigl), \diamond, 1\Bigl)$. Hence, the quotient space $H = F\bigl(\bigoplus_{l \in I}B_{l}\bigl)/L$ remains an object in $\tensor*[_2]{\cat{Bialg}}{}$ which, furthermore, is the coproduct of the family $\bigl(B_{l},\, \cdot_{l},\, \diamond_{l}\,  1_{l},\, \Delta_{l},\,
  \varepsilon_{l}\bigl)_{l \in I}$ in $\tensor*[_2]{\cat{Bialg}}{}$.

  Finally, the coproduct construction in the category $\tensor*[_2]{\cat{Bialg}}{}^{coc}$ goes in the same vein as the one performed above in $\tensor*[_2]{\cat{Bialg}}{}$. Indeed, the cocommutativity of the family $\bigl(B_{l},\, \cdot_{l},\,\diamond_{l},\,  1_{l},\, \Delta_{l},\,
  \varepsilon_{l}\bigl)_{l \in I}$ of objects in $\tensor*[_2]{\cat{Bialg}}{}^{coc}$ implies cocommutativity of both $\bigoplus_{l \in I} B_{l}$ and $F\bigl(\bigoplus_{l \in I} B_{l}\bigl)$.

\item Coproducts in $\tensor*[_2]{\cat{Halg}}{}$ and $\tensor*[_2]{\cat{Halg}}{^{coc}}$.
  
  Let now $\bigl(H_{l},\, \cdot_{l}, \diamond_{l}, 1_{l}, \Delta_{l},\, \varepsilon_{l},\, S_{l},\, T_{l}\bigl)_{l \in I}$ be a family of objects in $\tensor*[_2]{\cat{Halg}}{}$ and denote the coproduct in $\tensor*[_2]{\cat{Bialg}}{}$ of the underlying $2$-bialgebras by $\Bigl(\bigl(B,\, \cdot_{B}, \diamond_{B}, 1_{B}, \Delta_{B},\,
  \varepsilon_{B}\bigl), (q_{l})_{l \in I}\Big)$. Given the existence of a left adjoint for the forgetful functor $U \colon \tensor*[_2]{\cat{Halg}}{} \to \tensor*[_2]{\cat{Bialg}}{}$ (constructed in Proposition~\ref{pr:ladj}), we have a universal arrow from $B$ to $U$ (\cite[\S III.1, \S IV Theorem 2]{mcl_2e}), say $i \colon B \to H$ where $\bigl(H,\, \cdot_{H}, \diamond_{H}, 1_{H}, \Delta_{H},\, \varepsilon_{H}, S_{H}, T_{H}\bigl)$ is a $2$-Hopf algebra and $i$ is a morphism in $\tensor*[_2]{\cat{Bialg}}{}$. Then $\Bigl(\bigl(H,\, \cdot_{H}, \diamond_{H}, 1_{H}, \Delta_{H},\,
  \varepsilon_{H}, S_{H}, T_{H}\bigl), (i \circ q_{l})_{l \in I}\Bigl)$ is the coproduct in $\tensor*[_2]{\cat{Halg}}{}$ of the family $\bigl(H_{l},\, \cdot_{l}, \diamond_{l}, 1_{l}, \Delta_{l},\,
  \varepsilon_{l},\, S_{l},\, T_{l}\bigl)_{l \in I}$. To start with, given $l \in I$, note that $i \circ q_{l} \colon H_{l} \to H$ is a $2$-Hopf algebra map as a consequence of being a $2$-bialgebra map between the $2$-Hopf algebras $H_{l}$ and $H$.
  Furthermore, assume $(L, , \cdot_{L}, \diamond_{L}, 1_{L}, \Delta_{L},\, \varepsilon_{L}, S_{L}, T_{L})$ is another $2$-Hopf algebra and $(s_{l} \colon H_{l} \to L)_{l \in I}$ is a family of $2$-Hopf algebra morphisms. The universal property of the coproduct in $\tensor*[_2]{\cat{Bialg}}{}$ yields a unique $2$-bialgebra map $\overline{f} \colon B \to L$ such that $\overline{f} \circ q_{l} = s_{l}$ for all $l \in I$. Similarly, as $i$ is a universal map from $B$ to $U$, we obtain a unique $2$-Hopf algebra morphism $f \colon H \to L$ such that $f \circ i = \overline{f}$. To conclude, $f \colon H \to L$ is the unique $2$-Hopf algebra morphism such that $f \circ i \circ q_{l} = s_{l}$ for all $l \in I$.
  
  The same strategy for constructing coproducts applies verbatim to cocommutative $2$-Hopf algebras.
  
\item Coproducts in $\cat{Hbr}^{coc}$.

  Let $\bigl(H_{l}, \cdot_{l}, \diamond_{l}, 1, \Delta_{l},
  \varepsilon_{l}, S_{l}, T_{l}\bigl)_{l \in I}$ be a family of cocommutative Hopf
  braces and consider the coproduct, say $\Bigl(\bigl(H, \overline{\cdot},\overline{\diamond}, \overline{1}, \overline{\Delta}, \overline{\varepsilon}, \overline{S}, \overline{T}\bigl),\, (q_{l})_{l \in I}\Bigl)$, of this family in $\tensor*[_2]{\cat{Halg}}{^{coc}}$. Next, for all $t$, $s$, $r \in I$, we denote by $f_{t, s, r}$, $g_{t, s, r} \colon H_{t}\otimes H_{s} \otimes H_{r} \to H$ the $k$-linear maps defined as follows for all $x \otimes y \otimes z \in H_{t}\otimes H_{s} \otimes H_{r} $:
  \begin{align*}
    f_{t, s, r}(x \otimes y \otimes z) &= q_{t}(x)\, \overline{\diamond}\, (q_{s}(y)\, \overline{\cdot}\, q_{r}(z))\\
    g_{t, s, r}(x \otimes y \otimes z) &= \bigl(q_{t}(x_{(1)})\, \overline{\diamond}\, q_{s}(y)\bigl)\, \overline{\cdot} \,\overline{S}(q_{t}(x_{(2)}))\, \overline{\cdot} \, \bigl(q_{t}(x_{(3)}) \, \overline{\diamond}\, q_{r}(z)\bigl)
  \end{align*}
  Note that the cocommutativity assumption implies that both $f_{t, s, r}$ and $g_{t, s, r}$ are coalgebra maps and therefore the linear span $J_{0}$ of $\{f_{t, s, r}(x \otimes y \otimes z) - g_{t, s, r}(x \otimes y \otimes z) ~|~ x \otimes y \otimes z \in H_{t}\otimes H_{s} \otimes H_{r}; t,\, s,\, r \in I\}$ is in fact a coideal in $(H,\, \overline{\Delta},\, \overline{\varepsilon})$ (again by \cite[Exercise 2.1.29]{rad}).

  Denote by $\bigl(\cdot,\,\diamond, 1, \Delta, \varepsilon, S, T\bigl)$ the Hopf-brace structure on the quotient $H/J$ by the ideal $J:=\widetilde{J_0}$ described in Construction~\ref{con:id.chn}. The proof will be finished once we show that
  \begin{equation*}
    \Bigl(\bigl(H/J, \cdot, \diamond, 1, \Delta, \varepsilon, S, T),\, (s_{l})_{l \in I} \Bigl)
    \cong
    \coprod_{l} \bigl(H_{l}, \cdot_{l}, \diamond_{l}, 1, \Delta_{l}, \varepsilon_{l}, S_{l}, T_{l}\bigl)_{l \in I}
    \in    
    \cat{Hbr}^{coc},
  \end{equation*}
  where $s_{t} = \pi \circ q_{t}$ for all $t \in I$ and $\pi \colon H \to H/J$ denotes the canonical projection. To this end, consider another object $\bigl(B, \cdot_{B}, \diamond_{B}, 1_{B}, \Delta_{B}, \varepsilon_{B}, S_{B}, T_{B}\bigl)$ in $\cat{Hbr}^{coc}$ and a family of morphisms $\bigl(b_{l} \colon H_{l} \to B\bigl)_{l \in I}$ in $\cat{Hbr}^{coc}$. As $\bigl(B, \cdot_{B}, \diamond_{B}, 1_{B}, \Delta_{B}, \varepsilon_{B}, S_{B}, T_{B}\bigl)$ is in particular an object in $\tensor*[_2]{\cat{Halg}}{^{coc}}$, there exists a unique morphism $v \colon H \to B$ in $\tensor*[_2]{\cat{Halg}}{^{coc}}$ such that $v \circ q_{t} = b_{t}$ for all $t \in I$. Now since $\bigl(B, \cdot_{B}, \diamond_{B}, 1_{B}, \Delta_{B}, \varepsilon_{B}, S_{B}, T_{B}\bigl)$ is a Hopf brace, the following holds for all $t$, $s$, $r \in I$ and $x \otimes y \otimes z \in H_{t}\otimes H_{s} \otimes H_{r} $:
  \begin{align*}
    & v\bigl(f_{t, s, r}(x \otimes y \otimes z) - g_{t, s, r}(x \otimes y \otimes z) \bigl)\\
    & = v\Bigl(q_{t}(x)\, \overline{\diamond}\, (q_{s}(y)\, \overline{\cdot}\, q_{r}(z)) -  \bigl(q_{t}(x_{(1)})\, \overline{\diamond}\, q_{s}(y)\bigl)\, \overline{\cdot} \,\overline{S}(q_{t}(x_{(2)}))\, \overline{\cdot} \, \bigl(q_{t}(x_{(3)}) \, \overline{\diamond}\, q_{r}(z)\bigl)\Bigl) \\
    & =  b_{t}(x)\, \diamond_{B}\, (b_{s}(y)\, \cdot_{B}\, b_{r}(z)) -  \bigl(b_{t}(x_{(1)})\, \diamond_{B}\, b_{s}(y)\bigl)\, \cdot_{B} \, S(b_{t}(x_{(2)}))\, \cdot_{B} \, \bigl(b_{t}(x_{(3)}) \, \diamond_{B}\, b_{r}(z)\bigl)\\
    & = 0
  \end{align*}
  This shows that $J_{0} \subset {\rm ker}\, v$ and since $v$ is a morphism in $\tensor*[_2]{\cat{Halg}}{^{coc}}$ we can conclude that $J \subset {\rm ker}\, v$. Therefore we have a unique morphism $w \colon H/J \to B$ in $\tensor*[_2]{\cat{Halg}}{^{coc}}$ such that $w \circ \pi = v$. Putting everything together, we obtain that $w$ is the unique morphism in $\cat{Hbr}^{coc}$ such that $w \circ s_{t} = b_{t}$ for all $t \in I$, which concludes the proof.
\end{enumerate}


\addcontentsline{toc}{section}{References}

\Addresses
\end{document}